\newcommand {\ep} {\epsilon}
\newcommand {\gm} {\gamma}
\newcommand {\ii} {\infty}
\newcommand {\dt} {\delta}
\newcommand {\lb} {\lambda}
\newcommand {\Lb} {\Lambda}
\newcommand {\sm} {\setminus}
\newcommand {\su} {\subset}
\newcommand {\wt} {\widetilde}
\newcommand {\mc} {\mathcal}
\newcommand {\mb} {\mathbf}
\newtheorem{teo}{Theorem}[section]
\newtheorem{pro}{Proposition}[section]
\newtheorem{lm}{Lemma}[section]
\theoremstyle{definition}
\newtheorem{rem}{Remark}[section]
\newtheorem{df}{Definition}[section]
\title{Local ergodic theorems \\ in symmetric spaces of measurable operators}
\keywords{Semifinite von Neumann algebra, noncommutative symmetric space, Dunford-Schwartz operator, almost uniform convergence, local individual ergodic theorem,  local mean ergodic theorem}
\subjclass[2010]{47A35(primary), 46L52(secondary)}
\begin{document}
\date{May 7, 2018}

\begin{abstract}
Local mean and individual (with respect to almost uniform convergence in Egorov's sense) ergodic theorems are established
for actions of the semigroup $\mathbb R_+^d$ in symmetric spaces of measurable operators associated with a semifinite
von Neumann algebra.
\end{abstract}

\author{VLADIMIR CHILIN}
\address{National University of Uzbekistan, Tashkent, Uzbekistan}
\email{vladimirchil@gmail.com; chilin@ucd.uz}
\author{SEMYON LITVINOV}
\address{Pennsylvania State University \\ 76 University Drive \\ Hazleton, PA 18202, USA}
\email{snl2@psu.edu}

\maketitle

\section{Introduction}
Advancing Lance's extension of the pointwise ergodic theorem for actions of the group of integers on von
Neumann algebras, Conze and Dang-Hgoc \cite{cd} and Watanabe \cite{wa} studied continuous extensions
of Lance's results. In particular, noncommutative Wiener's local ergodic theorems were established for actions of
the semigroups $\mathbb R_+^d$ and $\mathbb R_+$, respectively.

Following Yeadon's ergodic theorem for the algebra $L^1$ of integrable operators associated with asemifinite von Neumann algebra, the corresponding Wiener's local ergodic theorem for actions of $\mathbb R_+$ with respect to bilaterally almost uniform convergence (in Egorov's sense) was initially considered in \cite{ca}. Later, Junge and Xu \cite{jx} derived that these averages converge bilaterally almost uniformly in any noncommutative $L^p$-space for $1\leq p<\ii$ and almost uniformly if
$2\leq p<\ii$. It was also noticed there that these results admit multiple versions.

We consider actions of the semigroup $ \mathbb R_+^d$ and show that, for a semifinite von Neumann algebra $\mc M$,
the corresponding ergodic averages $A_t(x)$ converge to $x$ almost uniformly as $t\to 0^+$ for all $x\in E$, where $E\su L^1(\mc M)+\mc M$ is a fully symmetric space such that the unit of $\mc M$ does not belong to $E$. Besides, we prove that if $E$ has order continuous norm $\|\cdot\|_E$, then $\|A_t(x)-x\|_E\to 0$ as $t\to 0^+$. Note that, along with any space $L^p(\mc M)$, $1\leq p<\ii$, the family of such fully symmetric spaces $E$ contains many noncommutative counterparts of classical Banach spaces of measurable functions, examples of which are given in the last section of the article.

\section{Preliminaries}

Let $\mathcal M$ be a semifinite von Neumann algebra equipped with a faithful normal semifinite trace $\tau$.
Let $\mathcal P(\mathcal M)$ be the complete lattice of projections in $\mathcal M$. If $\mathbf 1$ is the identity
of $\mathcal M$ and $e\in \mathcal P(\mathcal M)$, we write $e^{\perp}=\mathbf 1-e$.

Denote by $L^0=L^0(\mathcal M,\tau)$ the $*$-algebra of
$\tau$-measurable operators affiliated with $\mathcal M$. Let $\|\cdot\|_\ii$ be the uniform norm in $\mc M$. Endowed with the {\it measure topology} $t_\tau$ given by the
system of neighborhoods of zero
$$
\mc N(\ep,\dt)=\{ x\in L^0: \ \| xe\|_\ii\leq \dt \text{ \ for some \ } e\in \mc P(\mc M) \text{ \ with \ }
\tau(e^\perp)\leq \ep \},
$$
$\ep>0$, $\dt>0$, $L^0$ is a complete metrizable topological $*$-algebra \cite{ne}.

Let $L^p=L^p(\mathcal M,\tau)$, $1\leq p< \infty$,
$L^\infty(\mathcal M,\tau)=\mathcal M$, be the noncommutative $L^p$-space (see, for example, \cite{px})
equipped with the standard norm $\| \cdot \|_p$. Note that $\|x_n\|_p\to 0$ implies that $x_n\to 0$ in measure.

A net   $\{ x_\alpha\}_{\alpha \in A} \subset L^0$ is  said to converge to
$\widehat x\in L^0$ {\it almost uniformly} (a.u.) ({\it bilaterally almost uniformly} (b.a.u.))
if for every $\varepsilon>0$ there exists
$e\in \mathcal P(\mathcal M)$ such that $\tau(e^{\perp})\leq \varepsilon$ and
$\lim\limits_{\alpha \in A}\| (\widehat x-x_\alpha)e\|_\ii = 0$ (respectively,
$\lim\limits_{\alpha \in A}\| e(\widehat x-x_\alpha)e\|_\ii= 0$). Note that a.u. convergence is generally
stronger than b.a.u. convergence. Also, it is not difficult to verify that $x_n\to 0$ in measure implies that $x_{n_k}\to 0$
a.u. for some subsequence $\{x_{n_k}\}\su \{x_n\}$.

A linear map $T: L^1+\mathcal M \rightarrow  L^1+\mathcal M$ is called a
{\it Dunford-Schwartz operator} if
$$
\|T(x)\|_1 \leq \|x \|_1 \text{ \ for all \ } x \in L^1 \text{ \ and \ }
\|T(x)\|_\ii \leq \|x \|_\ii \text{ \ for all \ } x \in \mathcal M.
$$
Denote by  $DS^+=DS^+(\mathcal M, \tau)$ the set of positive Dunford-Schwartz operators.

Let $\mathbb N$ (respectively, $\mathbb R$) be the set of natural (respectively, real) numbers. Fix $d\in \mathbb N$ and denote
$\mathbb R_+^d=\{ \mb u=(u_1,\dots,u_d):\ u_i\ge 0, \ i=1,\dots,d\}$.
In what follows, $\{T_{\mb u}\}_{\mb u\in \mathbb R^d_+}\subset DS^+$ is a semigroup
such that $T_{\mb 0}(x)=x$ for all $x\in L^1+\mc M$.
If $\mb u, \mb v\in \mathbb R^d_+$, $\mb u=(u_1,\dots,u_d)$, $\mb v=(v_1,\dots,v_d)$,
then $\mb u\to\mb v$ means that $u_i\to v_i$ for each $i=1,\dots,d$.

A semigroup $\{T_{\mb u}\}_{\mb u\in \mathbb R^d_+}$ is said to be {\it strongly continuous} on
$ L^p$, $1\leq p<\ii$, if
$$
 \lim\limits_{\mb u\to \mb v}\|T_{\mb u}(x)-T_{\mb v}(x)\|_p=0.
$$
for each $x\in L^p$.
\begin{pro}\label{p21}
If a semigroup $\{T_{\mb u}\}_{\mb u\in \mathbb R^d_+}\su DS^+$ is strongly continuous on $L^1$, then it is
also strongly continuous on $L^p$ for every $1<p<\ii$.
\end{pro}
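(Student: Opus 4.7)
The plan is to reduce the assertion to strong continuity on the dense subspace $L^1\cap\mc M$ via an interpolation estimate, and then extend to $L^p$ by a standard three-epsilon density argument. I would first record that, by noncommutative Riesz--Thorin interpolation applied to the defining $L^1$- and $L^\ii$-contractivity, every Dunford--Schwartz operator $T$ satisfies $\|T(x)\|_p\leq \|x\|_p$ for every $1\leq p\leq \ii$ and every $x\in L^p$; in particular the differences $T_{\mb u}-T_{\mb v}$ are uniformly bounded on $L^p$ with norm at most $2$, uniformly in $\mb u,\mb v\in \mathbb R_+^d$.

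Fix now $x\in L^1\cap\mc M$ and set $y_{\mb u,\mb v}=T_{\mb u}(x)-T_{\mb v}(x)\in L^1\cap\mc M$. The key step is the log-convexity inequality for noncommutative $L^p$-norms,
\[
\|y\|_p\leq \|y\|_1^{1/p}\|y\|_\ii^{1-1/p},\qquad y\in L^1\cap\mc M,
\]
which reduces, via the generalized singular value function, to its classical commutative counterpart. Combined with the uniform bound $\|y_{\mb u,\mb v}\|_\ii\leq 2\|x\|_\ii$ and the hypothesis $\|y_{\mb u,\mb v}\|_1\to 0$ as $\mb u\to \mb v$, this immediately yields $\|y_{\mb u,\mb v}\|_p\to 0$ for every $x\in L^1\cap\mc M$.

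Since $\tau$ is semifinite, $L^1\cap\mc M$ is dense in $L^p$ for $1\leq p<\ii$, so given $\ep>0$ and $x\in L^p$ one chooses $x'\in L^1\cap\mc M$ with $\|x-x'\|_p<\ep$ and uses the $L^p$-contractivity of each $T_{\mb u}$ on the outer terms of
\[
\|T_{\mb u}(x)-T_{\mb v}(x)\|_p\leq \|T_{\mb u}(x-x')\|_p + \|T_{\mb u}(x')-T_{\mb v}(x')\|_p + \|T_{\mb v}(x'-x)\|_p
\]
to conclude by letting $\mb u\to\mb v$ and then $\ep\to 0^+$. The only genuinely nontrivial ingredient is the noncommutative log-convexity inequality, but this is a well-established feature of the $\mu(y)$-calculus and presents no real obstacle; all remaining steps are entirely routine.
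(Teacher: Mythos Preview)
Your argument is correct and follows essentially the same scheme as the paper: first establish $L^p$-convergence on $L^1\cap\mc M$ via an interpolation-type inequality, then pass to all of $L^p$ by density and the uniform $L^p$-contractivity of the $T_{\mb u}$. The only cosmetic difference is that where you invoke the log-convexity inequality $\|y\|_p\le\|y\|_1^{1/p}\|y\|_\ii^{1-1/p}$ as a black box, the paper normalizes $z_n=\|y_{m_0}\|_\ii^{-1}(T_{\mb u_n}(y_{m_0})-T_{\mb u_0}(y_{m_0}))$ so that $|z_n|\le\mb 1$ and uses the elementary operator inequality $|z_n|^p\le|z_n|$ directly, and the paper's density step is carried out via an explicit spectral truncation of a given $x\in L^p_+$ rather than an abstract choice of $x'\in L^1\cap\mc M$.
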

\begin{proof}
It is sufficient to show that $\| T_{\mb u_n}(x)-T_{\mb u_0}(x)\|_p\to 0$ as $\mb u_n\to \mb u_0$ for   any $0\neq x\in L^p_+$.
Fix $\ep>0$. If $x=\int_0^\ii\lb de_\lb$ is the spectral decomposition of $x$ and
$m\in \mathbb N$, denote $x_m=\int_0^{1/m}\lb de_\lb+\int_m^\ii\lb de_\lb$, $y_m=x-x_m$ and choose
$m_0$ to satisfy $\|x_{m_0}\|_p\leq \frac\ep4$. As $T_{\mb u}$ is a contraction in $L^p$
for each $\mb u\in \mathbb R_+^d$, we have
$$
\| T_{\mb u_n}(x_{m_0})-T_{\mb u_0}(x_{m_0})\|_p\leq\frac\ep2 \text{ \ \ for all\ \ } n.
$$
Next, we have $y_{m_0}\in L^1_+\cap \mc M$.  Since $x\neq 0$, without
loss of generality, we may assume that $y_{m_0}\neq 0$.  Let
$z_n=\|y_{m_0}\|_\ii^{-1}(T_{\mb u_n}(y_{m_0})-T_{\mb u_0}(y_{m_0}))$.
By the assumption, $\tau(|z_n|)=\| z_n\|_1\to 0$. Besides, since $T_{\mb u}$ is a contraction in
$\mc M$ for each $\mb u\in \mathbb R_+^d$,
it follows that $-\mb 1\leq z_n\leq \mb 1$, and so $|z_n|^p\leq |z_n|$ for all $n$.
Therefore, we have
$$
\|z_n\|_p=\tau(|z_n|^p)^{1/p}\leq \tau(|z_n|)^{1/p}\to 0,
$$
hence
$$
\lim_{\mb u_n\to\mb u_0}\| T_{\mb u_n}(y_{m_0})-T_{\mb u_0}(y_{m_0})\|_p= 0.
$$
Then there is $N\in \mathbb N$ such that
$$
\| T_{\mb u_n}(y_{m_0})-T_{\mb u_0}(y_{m_0})\|_p\leq\frac\ep2 \text{ \ \ for all\ \ } n\ge N.
$$
Therefore
$$
\| T_{\mb u_n}(x)-T_{\mb u_0}(x)\|_p\leq \| T_{\mb u_n}(x_{m_0})-T_{\mb u_0}(x_{m_0})\|_p+
\| T_{\mb u_n}(y_{m_0})-T_{\mb u_0}(y_{m_0})\|_p\leq\ep
$$
whenever $n\ge N$, which completes the proof.
\end{proof}

In view of Proposition \ref{p21}, we assume, throughout, that
$\{T_{\mb u}\}_{\mb u\in \mathbb R^d_+}\su DS^+$ is a strongly continuous semigroup on $L^1$.
Then, for a given $1\leq p<\ii$ and $x\in L^p$, the averages
\begin{equation}\label{eq31}
A_t(x)=\frac 1{t^d}\int_{[0,t]^d}T_{\mb u}(x)d\mb u
\end{equation}
belong to  $L^p$ for every $t>0$.


\section{Local Mean Ergodic Theorem in $L^p(\mc M,\tau)$, $1\leq p<\ii$}

\begin{lm}\label{l33}
Let  $1\leq p<\ii$, and let $t_0>0$. If $y\in L^p$ and $x=A_{t_0}(y)$, then $\|A_t(x)-x\|_p\to 0$ as $t\to 0$.
 In addition, if $y\in L^p\cap \mc M$, then $\|A_t(x)-x\|_\ii\to 0$ as $t\to 0$.
\end{lm}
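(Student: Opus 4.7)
The plan is to reduce everything to the convergence of $T_{\mb v}(x)$ to $x$ as $\mb v\to\mb 0$. I would start from the identity
$$
A_t(x)-x=\frac 1{t^d}\int_{[0,t]^d}(T_{\mb v}(x)-x)\,d\mb v,
$$
which, by the triangle inequality for the Bochner integral, gives $\|A_t(x)-x\|\leq \sup_{\mb v\in[0,t]^d}\|T_{\mb v}(x)-x\|$ in any norm $\|\cdot\|$ in which the integral converges. Thus it suffices to control $\|T_{\mb v}(x)-x\|_p$, respectively $\|T_{\mb v}(x)-x\|_\ii$, as $\mb v\to\mb 0$.

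For the first conclusion, the semigroup is strongly continuous on $L^p$ for every $1\leq p<\ii$: by hypothesis for $p=1$ and by Proposition \ref{p21} for $p>1$. Since $x=A_{t_0}(y)\in L^p$ (because each $A_{t_0}$ is a contraction on $L^p$), this immediately gives $\|T_{\mb v}(x)-x\|_p\to 0$ as $\mb v\to\mb 0$, and the first conclusion follows from the averaging inequality.

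For the $\mc M$-norm conclusion, strong continuity of the semigroup on $\mc M$ is typically unavailable, so the particular form $x=A_{t_0}(y)$ must be exploited. Using the semigroup property $T_{\mb v}T_{\mb u}=T_{\mb v+\mb u}$ and the fact that $T_{\mb v}$, being a bounded linear operator, commutes with the Bochner integral, I would rewrite
$$
T_{\mb v}(x)=\frac 1{t_0^d}\int_{[0,t_0]^d}T_{\mb v+\mb u}(y)\,d\mb u=\frac 1{t_0^d}\int_{\mb v+[0,t_0]^d}T_{\mb w}(y)\,d\mb w
$$
after the translation $\mb w=\mb u+\mb v$. Subtracting $x=t_0^{-d}\int_{[0,t_0]^d}T_{\mb w}(y)\,d\mb w$, the two integrals differ only over the symmetric difference $\Delta_{\mb v}$ of $\mb v+[0,t_0]^d$ and $[0,t_0]^d$, whose Lebesgue measure is $O(|\mb v|)$ as $\mb v\to\mb 0$. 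Since $y\in L^p\cap\mc M$ and $\|T_{\mb w}(y)\|_\ii\leq \|y\|_\ii$ for every $\mb w$, this yields
$$
\|T_{\mb v}(x)-x\|_\ii\leq \frac{|\Delta_{\mb v}|}{t_0^d}\,\|y\|_\ii\longrightarrow 0,
$$
whence $\|A_t(x)-x\|_\ii\to 0$ by the averaging inequality applied in the uniform norm.

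The main obstacle is precisely the uniform-norm statement, since strong continuity of $\mb v\mapsto T_{\mb v}$ on $\mc M$ generically fails. The argument circumvents this by observing that $A_{t_0}(y)$ is a convolution of $y$ against a box kernel; translating the kernel changes the average only through the symmetric difference of translates, and the resulting error is controlled in $\mc M$-norm by $\|y\|_\ii$ independently of any continuity of the semigroup in the uniform topology.
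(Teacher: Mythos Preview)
Your proof is correct. For the uniform-norm statement you do exactly what the paper does: translate the box $[0,t_0]^d$ by $\mb v$, cancel the common part, and bound the integral over the symmetric difference by its Lebesgue measure times $\|y\|_\ii$. The paper carries out the same computation and obtains the explicit estimate $\|T_{\mb v}(x)-x\|_p\leq 2\,t_0^{-d}\bigl(t_0^d-(t_0-v)^d\bigr)\|y\|_p$ with $v=\max_i v_i$, then bounds this by $t\,C(t_0,d)\|y\|_p$.

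For the $L^p$-statement, however, you take a genuinely different and shorter route. The paper applies the \emph{same} symmetric-difference calculation in $\|\cdot\|_p$, thus obtaining a quantitative bound linear in $t$ and in $\|y\|_p$. You instead invoke strong continuity of the semigroup on $L^p$ (Proposition~\ref{p21}) at the point $x$, which gives $\|T_{\mb v}(x)-x\|_p\to 0$ with no rate. Your argument is simpler and, notably, does not use the special form $x=A_{t_0}(y)$ at all; it proves $\|A_t(x)-x\|_p\to 0$ for \emph{every} $x\in L^p$, which is precisely Theorem~\ref{t33b}. The paper's approach buys a uniform quantitative estimate (and avoids appealing to Proposition~\ref{p21}), but since the lemma is only stated qualitatively and its uses in Theorem~\ref{t33b} and Lemma~\ref{l32} require no rate, your shortcut is entirely adequate.

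One small remark: when you pass from the pointwise bound $\|T_{\mb v}(x)-x\|_\ii\leq t_0^{-d}|\Delta_{\mb v}|\,\|y\|_\ii$ to a bound on $\|A_t(x)-x\|_\ii$, you are using that the $L^p$-Bochner integral of a family uniformly bounded in $\mc M$-norm stays in $\mc M$ with the expected norm bound. The paper makes the identical leap (``replacing the norm $\|\cdot\|_p$ with $\|\cdot\|_\ii$'') without comment, so you are not being less rigorous than the original; it is justified by duality with $L^1$ or by approximating with simple functions and using that the unit ball of $\mc M$ is closed in measure.
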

\begin{proof}
 Let $0<v_1,\dots,v_d\leq v<t_0$ and $\mb v=(v_1, \dots, v_d)\in \mathbb R^d_+$. Then
\begin{equation*}
\begin{split}
T_{\mb v}(x)&-x=\frac 1{t_0^d}\bigg [\int\limits_{[0,t_0]^d}T_{\mb u+\mb v}(y)d\mb u
-\int\limits_{[0,t_0]^d}T_{\mb u}(y)d\mb u \bigg ]
\\
&=\frac 1{t_0^d} \bigg[\int\limits_{[v_1,t_0+v_1]\times \dots \times [v_d,t_0+v_d] }T_{\mb u}(y)d\mb u
-\int\limits_{[0,t_0]^d}T_{\mb u}(y)d\mb u\bigg ]
\\
&=\frac 1{t_0^d}\bigg [\int\limits_{[v_1,t_0]\times \cdots \times [v_d,t_0]}T_{\mb u}(y)d\mb u
+\int\limits_{[v_1,t_0+v_1]\times \dots \times [v_d,t_0+v_d]\sm ([v_1,t_0]\times \dots \times [v_d,t_0])}
T_{\mb u}(y)d\mb u
\\
&\ \ \ \ \ \ \ \ -\int\limits_{[v_1,t_0]\times \dots \times [v_d,t_0] }T_{\mb u}(y)d\mb u
-\int\limits_{[0,t_0]^d\sm ([v_1,t_0]\times \dots \times [v_d,t_0])}T_{\mb u}(y)d\mb u\bigg ]
\\
&=\frac 1{t_0^d}\bigg [\int\limits_{[v_1,t_0+v_1]\times \dots \times [v_d,t_0+v_d]\sm ([v_1,t_0]\times
\cdots \times [v_d,t_0])}T_{\mb u}(y)d\mb u\\
&\ \ \ \ \ \ \ \ -\int\limits_{[0,t_0]^d\sm ([v_1,t_0]\times \dots \times [v_d,t_0])}T_{\mb u}(y)d\mb u\bigg ],
\end{split}
\end{equation*}
implying that
\[
\|T_{\mb v}(x)-x\|_p\leq 2\frac{t_0^d-(t_0-v)^d}{t_0^d}\|y\|_p.
\]
As $0<v<t_0$, we have
\[
t_0^d-(t_0-v)^d=v\sum_{k=1}^d\binom{d}{k}t_0^{d-k}(-v)^{k-1}<v\sum_{k=1}^d\binom{d}{k}t_0^{d-k}v^{k-1}
<v\sum_{k=1}^d\binom{d}{k}t_0^{d-1},
\]
which implies that, with $C(t_0,d)=2t_0^{-1}\sum_{k=1}^d\binom{d}{k}$,
\[
\|T_{\mb v}(x)-x\|_p<v\,C(t_0,d)\|y\|_p\leq t\,C(t_0,d)\|y\|_p
\]
whenever $v\leq t<t_0$. It follows then that
\[
\| A_t(x)-x\|_p=\bigg \|\frac 1{t^d}\int_{[0,t]^d}(T_{\mb v}(x)-x)d\mb v\bigg \|_p\leq  t\,C(t_0,d)\|y\|_p,
\]
hence $\| A_t(x)-x\|_p\to 0$ as $t\to 0$.

Second part of the statement follows by replacing the norm $\|\cdot\|_p$ with $\|\cdot\|_\ii$ in the
above inequalities.
\end{proof}

Now we can prove a local mean ergodic theorem.  One may notice that, unlike ergodic theorems with
"infinite time", the convergence holds also in the space $L^1$ with $\tau(\mathbf 1) = \infty$.

\begin{teo}\label{t33b}
Let  $1\leq p<\ii$. If $x\in L^p$, then $\| A_t(x)-x\|_p\to 0$ as $t\to 0$.
\end{teo}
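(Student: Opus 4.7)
The strategy is to combine Lemma~\ref{l33} with the strong continuity of $\{T_{\mb u}\}$ on $L^p$ (guaranteed by Proposition~\ref{p21}) via a three-$\ep$ density argument, exploiting that each $A_t$ is a contraction on $L^p$ (as a Bochner average of the contractions $T_{\mb u}$).

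Fix $x\in L^p$ and $\ep>0$. First I would use strong continuity at $\mb 0$: since $\mb u\mapsto T_{\mb u}(x)$ is $L^p$-norm continuous with $T_{\mb 0}(x)=x$, there exists $t_0>0$ with $\|T_{\mb u}(x)-x\|_p<\ep/3$ for all $\mb u\in[0,t_0]^d$. Pulling the norm inside the Bochner integral,
\[
\|A_{t_0}(x)-x\|_p\leq \frac{1}{t_0^d}\int_{[0,t_0]^d}\|T_{\mb u}(x)-x\|_p\,d\mb u<\ep/3.
\]
Set $z=A_{t_0}(x)$, so that $\|z-x\|_p<\ep/3$.

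Next I would apply Lemma~\ref{l33} with $y=x$: since $z=A_{t_0}(x)$, the lemma yields $\|A_t(z)-z\|_p\to 0$ as $t\to 0$, so there is $\dt>0$ with $\|A_t(z)-z\|_p<\ep/3$ for all $0<t<\dt$. By the contractivity of $A_t$ on $L^p$,
\[
\|A_t(x)-x\|_p\leq \|A_t(x-z)\|_p+\|A_t(z)-z\|_p+\|z-x\|_p<\ep
\]
for all $0<t<\dt$, which proves the theorem.

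There is no real obstacle here: the only subtle point is justifying the Bochner-integral estimate in the approximation step, which requires $\mb u\mapsto T_{\mb u}(x)$ to be norm-continuous (hence strongly measurable) on $[0,t_0]^d$; this is exactly what strong continuity on $L^p$ provides. Incidentally, the approximation step alone already furnishes the conclusion (by letting $t_0\to 0$ and using that $\ep$ is arbitrary), but routing through Lemma~\ref{l33} gives the modular structure the authors seem to prefer and mirrors the splitting-type technique that will be needed for the almost uniform convergence results alluded to in the introduction.
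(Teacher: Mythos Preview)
Your proof is correct and follows essentially the same three-$\ep$ strategy as the paper: approximate $x$ by $z=A_{t_0}(x)$ using strong continuity on $L^p$, apply Lemma~\ref{l33} to $z$, and close with the contractivity of $A_t$. The paper phrases the approximation step as a sequence $x_n=A_{t_n}(x)\to x$ before selecting one term, but the argument is otherwise identical---and your closing observation that the Bochner estimate $\|A_t(x)-x\|_p\le t^{-d}\int_{[0,t]^d}\|T_{\mb u}(x)-x\|_p\,d\mb u$ already gives the conclusion for all $0<t\le t_0$ (so that Lemma~\ref{l33} is, strictly speaking, unnecessary here) is correct.
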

\begin{proof}

Since $\| T_{\mb u}(x)-x\|_p\to 0$ as $\mb u\to \mb 0$ (see Proposition \ref{p21}), given $n\in \mathbb N$, there is $t_n>0$ such that
\[
\| T_{\mb u}(x)-x\|_p<\frac 1n
\]
 whenever $\mb u = (u_1,\dots,u_d), \ 0 < u_1,\dots,u_d \leq t_n$. Then we have
\[
\| A_{t_n}(x)-x\|_p=\bigg \| \frac 1{t_n^d}\int\limits_{[0,t_n]^d}(T_{\mb u}(x)-x)d\mb u \bigg \|_p<\frac 1n,
\]
hence $x_n=A_{t_n}(x)\to x$ in $L^p$.

Fix $\ep>0$. As $\|x_n-x\|_p\to 0$, choose
$n_0$ such that $\|x_{n_0}-x\|_p<\frac\ep4$. Next, since, by Lemma \ref{l33}, $\|A_t(x_{n_0})-x_{n_0}\|_p\to 0$,
there is $t_0>0$ such that $0<t<t_0$ entails that $\|A_t(x_{n_0})-x_{n_0}\|_p<\frac\ep2$. Now, given $0<t<t_0$,
we have
\begin{equation*}
\begin{split}
\|A_t(x)-x\|_p&=
\|A_t(x)-A_t(x_{n_0})\|_p+\|A_t(x_{n_0})-x_{n_0}\|_p+\|x_{n_0}-x\|_p\\
&< 2\|x_{n_0}-x\|_p+\frac\ep2<\ep,
\end{split}
\end{equation*}
and the assertion follows.
\end{proof}

\section{Maximal Ergodic Inequality for Actions of $\mathbb R_+^d$}

The next maximal ergodic inequality, due toYeadon \cite[Theorem 1]{ye}, provides the main tool for proving individual ergodic theorems in semifinite von Neumann algebras.
\vskip 5 pt
\begin{teo}\label{t31}
Let  $T\in DS^+$. Then for every
$x \in L^1_+$ and $\lb>0$ there exists $e\in \mc P(\mc M)$ such that
\[
\tau(e^\perp)\leq \frac {\| x\|_1} \lb
\text{ \ \ and \ \ } \sup_n \bigg \| e\,\frac 1n\sum_{k=0}^{n-1}T^k(x)\,e\bigg \|_\ii \leq \lb.
\]
\end{teo}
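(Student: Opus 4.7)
The plan is to reproduce Yeadon's original construction: the desired projection $e$ will arise as the orthogonal complement of a supremum of spectral projections associated with the partial ergodic averages $A_n(x) := \frac{1}{n}\sum_{k=0}^{n-1}T^k(x)$.

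For each $n$, I would let $f_n := \chi_{(\lambda,\ii)}(A_n(x))$, so that, by the spectral theorem, $f_n^\perp A_n(x) f_n^\perp \leq \lambda f_n^\perp$. Setting $p_N := f_1 \vee \cdots \vee f_N$ and $p := \bigvee_N p_N$, the projection $e := p^\perp$ satisfies $e \leq f_n^\perp$ for every $n$, hence $\|eA_n(x)e\|_\ii \leq \lambda$ for all $n$. So the whole problem reduces to the trace estimate $\tau(p) \leq \|x\|_1/\lambda$; by normality of $\tau$ it is enough to prove $\tau(p_N) \leq \|x\|_1/\lambda$ uniformly in $N$.

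To obtain this uniform bound, I would decompose $p_N$ as the orthogonal sum $p_N = q_1 + q_2 + \cdots + q_N$, where $q_n := p_n - p_{n-1}$ (with $p_0 := 0$) is a projection dominated by $f_n$. Since $q_n \leq f_n$ gives $\lambda q_n \leq q_n A_n(x) q_n$, taking the trace yields $\lambda\tau(q_n) \leq \tau(q_n A_n(x))$. The nontrivial step is to sum these inequalities so that the factors $1/n$ disappear in the correct way; this can be done by a rearrangement argument that exploits (i) the positivity of $T$, (ii) the recursion $nA_n(x) = x + (n-1)T(A_{n-1}(x))$, and (iii) the $L^1$-contractivity $\|T^k(x)\|_1 \leq \|x\|_1$. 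The resulting telescoping bound produces $\lambda\sum_{n=1}^N\tau(q_n) \leq \|x\|_1$, as required.

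The main obstacle, and Yeadon's principal technical contribution, is precisely this last rearrangement. In the commutative setting one would appeal to disjointness of the level sets of the maximal function via a Calder\'on--Zygmund-type covering; noncommutatively the $f_n$ need not commute, so disjointness is replaced by mutual orthogonality of the increments $q_n$. The absence of commutation between $q_n$ and $T^k(x)$ makes the naive Abel-summation estimates fail, and one must combine the positivity of $T$ with the orthogonality of the $q_n$ in a subtle way to carry the telescoping through. Once the uniform trace bound on $p_N$ is secured, the rest of the argument is a routine application of normality of $\tau$ and of the spectral theorem.
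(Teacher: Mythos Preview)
The paper does not prove this statement at all: Theorem~\ref{t31} is quoted from Yeadon \cite[Theorem~1]{ye} and used as a black box. So there is no ``paper's own proof'' to compare against; what can be assessed is whether your sketch is a faithful outline of Yeadon's argument.

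It is not, and the defect is concrete. You set $p_n=f_1\vee\cdots\vee f_n$ with $f_n=\chi_{(\lambda,\infty)}(A_n(x))$, then $q_n:=p_n-p_{n-1}$, and you assert that $q_n\le f_n$. In a noncommutative von Neumann algebra this is false in general: for projections $p,q$ one only has the Murray--von Neumann equivalence $(p\vee q)-p\sim q-(p\wedge q)$, not the order relation $(p\vee q)-p\le q$. (Two rank-one projections in $M_2$ in general position already give $p\vee q-p=p^\perp\not\le q$.) Consequently the key inequality $\lambda q_n\le q_nA_n(x)q_n$ does not follow, and the telescoping estimate $\lambda\sum\tau(q_n)\le\|x\|_1$ built on it has no foundation. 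You correctly flag that noncommutativity is the obstacle, but you locate it in the wrong place: the real failure is the lattice identity $q_n\le f_n$, not merely the commutation of $q_n$ with $T^k(x)$.

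Yeadon's actual construction avoids this trap. He works with the partial sums $S_n(x)=\sum_{k=0}^{n-1}T^k(x)$ and their spectral projections $e_n=\chi_{[0,n\lambda]}(S_n(x))$, and builds the controlling projection by an \emph{inductive} procedure tied to the recursion $S_n=x+T(S_{n-1})$, using at each step that $T$ is positive and an $L^\infty$-contraction to transport spectral information from level $n-1$ to level $n$. The orthogonal pieces that arise are not the increments of a supremum of the $f_n$, and the trace bound comes from a genuinely noncommutative replacement for the commutative covering lemma rather than from the Abel-type rearrangement you describe. If you want to reconstruct the proof, start from Yeadon's inductive definition of the projections rather than from $\bigvee_n f_n$.
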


Here is a continuous multi-parameter extension of Theorem \ref{t31} (cf. \cite[Remark 4.7]{jx}):
\vskip 5 pt
\begin{teo}\label{t32}
 Let the averages $A_t$ be given by (\ref{eq31}). Then there is a constant $\chi_d>0$ such that, given $x\in L^1_+$ and $\lb>0$, there exists $e\in \mc P(\mc M)$ satisfying inequalities
\[
\tau(e^\perp)\leq \frac {2\| x\|_1} \lb
\text{ \ \ and \ \ } \sup_{t> 0} \| eA_t(x)e \|_\ii \leq \chi_d \, \lb.
\]

\end{teo}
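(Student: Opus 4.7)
The plan is to reduce the continuous supremum over $t>0$ to a supremum over the dyadic scales $t=2^n$, $n\in\mathbb Z$, and then to invoke Yeadon's Theorem~\ref{t31} after rewriting the dyadic averages as multi-parameter Ces\`aro averages of commuting Dunford--Schwartz operators applied to an auxiliary $L^1_+$-element. Positivity of $T_{\mb u}$ and the inclusion $[0,t]^d\subset[0,2^n]^d$ yield, for $t\in[2^{n-1},2^n]$,
\[
A_t(x)\le \frac{1}{(2^{n-1})^d}\int_{[0,2^n]^d}T_{\mb u}(x)\,d\mb u=2^d A_{2^n}(x),
\]
so that $\sup_{t>0}A_t(x)\le 2^d\sup_{n\in\mathbb Z}A_{2^n}(x)$ in the positive-operator order.

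For the range $n\ge 0$ I would set $y=\int_{[0,1]^d}T_{\mb v}(x)\,d\mb v\in L^1_+$, note $\|y\|_1\le\|x\|_1$, and partition $[0,2^n]^d$ into unit cubes. Using the semigroup identity $T_{\mb k+\mb v}=S_1^{k_1}\cdots S_d^{k_d}T_{\mb v}$ with $S_i:=T_{\mb e_i}\in DS^+$ pairwise commuting, this gives
\[
A_{2^n}(x)=\frac{1}{2^{nd}}\sum_{\mb k\in\{0,\ldots,2^n-1\}^d}S_1^{k_1}\cdots S_d^{k_d}(y),
\]
which is the standard $d$-parameter ergodic average. Iterating Yeadon's Theorem along each of the $d$ coordinate directions (or, equivalently, appealing to the multi-parameter noncommutative maximal inequality of \cite{jx}) produces a projection $e_1\in\mc P(\mc M)$ with $\tau(e_1^\perp)\le\|y\|_1/\lb\le\|x\|_1/\lb$ and $\sup_{n\ge 0}\|e_1A_{2^n}(x)e_1\|_\ii\le c_d\lb$ for some constant $c_d>0$ depending only on $d$.

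The range $n<0$ (small $t$) is handled separately and is where the additional $\|x\|_1/\lb$ budget in $\tau(e^\perp)\le 2\|x\|_1/\lb$ is spent. Since $A_{2^n}(x)=\int_{[0,1]^d}T_{2^n\mb u}(x)\,d\mb u$ concentrates near $\mb u=\mb 0$ and thus stays close to $x$ itself, I would build a second projection $e_2$ from a spectral cut-off of $x$ (or of $A_1(x)$) and bound $\|e_2A_{2^n}(x)e_2\|_\ii$ uniformly in $n\le 0$ by exploiting the $\mc M$-contractivity of each $T_{\mb u}$ together with the bound $\|A_{2^n}(x)\|_1\le\|x\|_1$. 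Taking $e=e_1\wedge e_2$ gives $\tau(e^\perp)\le 2\|x\|_1/\lb$, and a choice of $\chi_d$ absorbing $2^d$ and the constants from both regimes completes the argument.

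The genuine difficulty is the small-$t$ regime: Yeadon-type (``infinite-time'') arguments only demand control of $A_t(x)$ as $t\to\ii$, whereas the local theorem also requires uniform control as $t\to 0^+$, where the averages inherit any unboundedness of $x$. Because the pointwise supremum $\sup_{n\le 0}A_{2^n}(x)$ is not a priori well-defined as a positive operator in the noncommutative framework, constructing $e_2$ rigorously is the step that really needs attention; presumably the authors invoke a covering or truncation argument akin to that used in the multi-parameter maximal theorem of \cite{jx}. A secondary technical point is that, even in the ``large-$t$'' regime, the $d$-parameter maximal inequality for commuting Dunford--Schwartz operators is not a formal iteration of the one-parameter Yeadon theorem and genuinely requires the Junge--Xu machinery.
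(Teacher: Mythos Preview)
Your proposal leaves a genuine gap in the small-$t$ regime, which you yourself flag: the projection $e_2$ is never constructed, and the heuristic that $A_{2^n}(x)$ ``concentrates near $x$'' for $n\le 0$ does not yield a uniform $\|\cdot\|_\ii$ bound on $e_2A_{2^n}(x)e_2$, since $x$ lies only in $L^1$. A spectral cut-off of $x$ controls $x$ itself but not $T_{\mb u}(x)$ for small $\mb u$, so this sketch does not close.

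The paper avoids the small/large split altogether by a device you do not mention. For \emph{every} positive rational $t=n/m$ one has
\[
A_{n/m}(x)=\frac{1}{n^d}\sum_{i_1=0}^{n-1}\cdots\sum_{i_d=0}^{n-1}
T_{(1/m,0,\dots,0)}^{i_1}\cdots T_{(0,\dots,0,1/m)}^{i_d}(y_m),
\qquad y_m=\int_{[0,1]^d}T_{\mb v/m}(x)\,d\mb v,
\]
so small and large rationals are handled simultaneously as discrete $d$-parameter averages; there is no separate $n<0$ case. The multi-parameter maximal step is then reduced to a one-parameter one via Brunel's theorem \cite{br} (see \cite[Ch.~6, \S6.3, Theorem~3.4]{kr}): there exist $\chi_d>0$, $n_d\in\mathbb N$, and a single operator $S\in DS^+$ built from the commuting contractions $T_{(1/m,0,\dots,0)},\dots,T_{(0,\dots,0,1/m)}$ such that each $d$-parameter average is dominated by $\chi_d\cdot\frac{1}{n_d}\sum_{k=0}^{n_d-1}S^k(y_m)$. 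Yeadon's Theorem~\ref{t31} applied once to $S$ and $y_m$ (with $\|y_m\|_1\le\|x\|_1$) then furnishes a projection $f$ controlling $\|fA_r(x)f\|_\ii$ over all positive rationals $r$; this Brunel reduction is precisely the substitute for the ``iteration of Yeadon'' that you rightly suspect fails in the noncommutative setting. Finally, the factor $2$ in $\tau(e^\perp)\le 2\|x\|_1/\lb$ arises not from a second small-$t$ projection but from the passage from rational to real $t$: one approximates $t$ by rationals, extracts an a.u.-convergent subsequence of $A_{r_{n_k}}(x)$, and spends a second projection $g$ with $\tau(g^\perp)\le\|x\|_1/\lb$ to transfer the bound to $A_t(x)$, setting $e=f\wedge g$.
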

\begin{proof}
Let $\frac nm\in \mathbb Q_+$, where $n, m \in \mathbb N$. Let $x\in L^1_+$ and denote
$y_m=\int_{[0,1]^d}T_{\frac{\mb v}m}(x)d\mb v$. Then we have
\begin{equation}\label{e5}
\begin{split}
A_{\frac nm}(x)&=\frac {m^d}{n^d}\int_{[0,n/m]^d}T_{\mb u}(x)d\mb u
=\frac {1}{n^d } \int_{[0,n]^d}T_{\frac{\mb v}m}(x)d\mb v \\
&=\frac {1}{n^d } \sum_{i_1=0}^{n-1}\dots\sum_{i_d=0}^{n-1} \int\limits_{i_1}^{i_1+1}  T_{(v_1/m,0,\dots,0)}(x)dv_1 \ \dots \int\limits_{i_d}^{i_d+1}  T_{(v_d/m,0,\dots,0)}(x)dv_d\\
&=\frac {1}{n^d }  \sum_{i_1=0}^{n-1}\dotsc\sum_{i_d=0}^{n-1}T_{(1/m,0,\dots,0)}^{i_1}\cdot
\dotsc \cdot T_{(0,\dots,0,1/m)}^{i_d}(y_m).
\end{split}
\end{equation}

Next we use a result due to A. Brunel \cite{br} (see \cite[Ch.6, \S 6.3, Theorem 3.4]{kr}) (although it was originally formulated for commuting contractions in a commutative $L^1$-space, one can see that the proof goes when $L^1=L^1(\mc M,\tau)$): if $T_1,\dots,T_d$ are positive commuting contractions of $L^1$, then there exist $\chi_d>0$, $n_d\in\mathbb N$, and
$\{ a_{\mb n}>0: \mb n\in \mathbb N_0^d\}$ with $\sum a_{\mb n}=1$ such that the operator
$S=\sum a_{\mb n}T_1^{n_1}\cdot \dotsc \cdot T_d^{n_d}$ satisfies the inequality
\begin{equation}\label{e6}
\frac 1{n^d}\sum_{i_1=0}^{n-1}\dotsc \sum_{i_d=0}^{n-1}T_1^{i_1}\cdot \dotsc \cdot T_d^{i_d}(x)\leq
\frac {\chi_d}{n_d}\sum_{j=0}^{n_d-1}S^j(x)
\end{equation}
for all $n=1,2,\dots$ and $x\in L_+^1$.

Now it follows from (\ref{e5}) and (\ref{e6}) that
\begin{equation}\label{e7}
0\leq A_{\frac nm}(x)\leq {\chi_d}\frac 1{n_d}\sum_{k=0}^{n_d-1}S^k(y_m).
\end{equation}
Since $S\in DS^+$ it follows by Theorem \ref{t31} that there is $f\in \mc P(\mc M)$ such that
$$
\tau(f^\perp)\leq \frac {\| y_m\|_1}\lb\leq \frac {\| x\|_1}\lb \text{ \ \ and \ \ } \sup_n \bigg \| f\, \frac 1n\sum_{k=0}^{n-1} S^k(y_m)\, f \bigg \|_\ii\leq \lb,
$$
implying, in view of (\ref{e7}), that
$$
\sup_{r\in \mathbb Q_+\sm \{0\}} \| f\, A_r(x)\, f\|_\ii \leq \chi_d\lb.
$$

If $t>0$, let $t \leftarrow r_n \in \mathbb Q_+\sm \{0\}$. Then we have $A_{r_n}(x)\to A_t(x)$  in measure. Therefore,
 $A_{r_{n_k}}(x)\to A_t(x)$ a.u. for a subsequence $\{r_{n_k}\}\su \{r_n\}$. Thus, it is possible to find
$g\in \mc P(\mc M)$ such that
$$
\tau(g^\perp)\leq \frac {\| x\|_1}\lb \text { \ \ and \ \ } \|gA_{r_{n_k}}(x)g\|_\ii \to \| gA_t(x)g\|_\ii \text{ \ as \ } k\to \ii.
$$
Letting $e=f\land g$, we obtain the required inequalities.
\end{proof}

\begin{rem}\label{r31}
As it was carried out in \cite{cl1}, one can see that Theorem \ref{t32} can be extended to an arbitrary space $L^p$,
$1<p<\ii$.
\end{rem}

\section{Local Individual Ergodic Theorem  in $L^p(\mc M,\tau)$}

We utilize the notion of bilaterally uniform equicontinuity in measure which is a noncommutative counterpart of the continuity in measure at zero of the maximal function of a sequence of maps from a normed space into a space of almost everywhere bounded measurable functions (see \cite{li}):

\begin{df}
Let $(X,\| \cdot \|)$ be a normed space. A sequence of maps $M_n: X\to L^0$
is called {\it bilaterally uniformly equicontinuous in measure (b.u.e.m.) at zero}
if for every $\ep>0$ and $\dt>0$ there exists $\gm>0$ such that, given $x\in X$ with $\| x\|<\gm$,
there is a projection $e\in \mc P(\mc M)$ satisfying conditions
$$
\tau(e^{\perp})\leq \ep \text{ \ \ and \ \ } \sup_n\| eM_n(x)e\|_{\ii}\leq \dt.
$$
\end{df}

In order to establish a.u. convergence of the averages (\ref{eq31}), we will need the following.
\begin{lm}\label{l31}
Let $(X,\| \cdot \|)$ be a normed space, and let $M_n:X\to L^0$ be a sequence of maps b.u.e.m. at zero.
If $\{z_m\}\su X$ is such that $\| z_m\|\to 0$, then for every $\ep>0$ and $\dt>0$ there are $z_{m_0}\in \{z_m\}$
and $e\in \mc P(\mc M)$ satisfying conditions
$$
\tau(e^\perp)\leq \ep \text{ \ \ and \ \ } \sup_n\| M_n(z_{m_0})e\|_\ii \leq \dt.
$$
\end{lm}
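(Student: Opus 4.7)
The plan is to apply the b.u.e.m. hypothesis to a single well-chosen element of the sequence $\{z_m\}$. For the prescribed $\ep>0$ and $\dt>0$, the definition furnishes a threshold $\gm>0$ such that every $x\in X$ with $\|x\|<\gm$ admits a projection $e_x\in\mc P(\mc M)$ satisfying $\tau(e_x^\perp)\leq\ep$ and $\sup_n\|e_xM_n(x)e_x\|_\ii\leq\dt$. Since $\|z_m\|\to 0$, I would pick any index $m_0$ with $\|z_{m_0}\|<\gm$ and set $e:=e_{z_{m_0}}$; the inequality $\tau(e^\perp)\leq\ep$ is then immediate from the definition.

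The subtle point is that the stated conclusion is phrased as the one-sided bound $\|M_n(z_{m_0})e\|_\ii\leq\dt$, whereas b.u.e.m. naturally delivers only the bilateral bound $\|eM_n(z_{m_0})e\|_\ii\leq\dt$. To bridge this, my plan is to refine the selection along a subsequence: apply b.u.e.m. with tightened parameters $(\ep/2^{k+1},\dt_k)$ for some $\dt_k\downarrow 0$ to obtain thresholds $\gm_k$, pick $m_k$ with $\|z_{m_k}\|<\gm_k$, and form the single projection $e=\bigwedge_k e_k$, whose complement satisfies $\tau(e^\perp)\leq\sum_k\ep/2^{k+1}\leq\ep$ by subadditivity of $\tau$ on projection complements. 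Under this common $e$, the bilateral bounds $\sup_n\|eM_n(z_{m_k})e\|_\ii\leq\dt_k$ tighten along the subsequence, and a specific $z_{m_0}$ far enough along can then be used to convert the bilateral estimate into the required one-sided one, for instance via the $C^*$-identity $\|M_n(x)e\|_\ii^2=\|eM_n(x)^*M_n(x)e\|_\ii$ combined with the norm decay.

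I expect this bilateral-to-one-sided upgrade to be the main technical obstacle; everything else is a purely mechanical quantification of b.u.e.m. together with the defining property of the convergent sequence $\{z_m\}$. Once the right projection and index are in hand, the two required inequalities fall out of the construction, and the selection $m_0=m_{k_0}$ for some sufficiently large $k_0$ delivers the claim.
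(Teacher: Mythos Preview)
You have correctly identified the only real difficulty: b.u.e.m.\ delivers a bilateral bound $\|eM_n(z)e\|_\ii\leq\dt$, while the lemma asks for the one-sided bound $\|M_n(z)e\|_\ii\leq\dt$. Your proposed bridge, however, does not work. The $C^*$-identity $\|M_n(x)e\|_\ii^2=\|eM_n(x)^*M_n(x)e\|_\ii$ relates the one-sided norm to the compression of $M_n(x)^*M_n(x)$ by $e$, but b.u.e.m.\ gives no control whatsoever over this quantity; it bounds $\|eM_n(x)e\|_\ii$, not $\|eM_n(x)^*M_n(x)e\|_\ii$, and the two are in general unrelated. Concretely, in $M_2(\mathbb C)$ with $e=\left(\begin{smallmatrix}1&0\\0&0\end{smallmatrix}\right)$ and $y=\left(\begin{smallmatrix}0&0\\1&0\end{smallmatrix}\right)$ one has $eye=0$ while $\|ye\|_\ii=1$. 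Letting $\dt_k\downarrow 0$ or invoking ``norm decay'' of $z_{m_k}$ does not rescue the argument, because the maps $M_n$ are not assumed linear or continuous, so smallness of $\|z\|$ says nothing about $M_n(z)$ as an operator.

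The paper's mechanism for the upgrade is entirely different: a right-support cut. Given a projection $g$ with $\|g\,y\,g\|_\ii\leq\dt$, set $q=\mathbf 1-\mathbf r(g^\perp y)$. Then $\tau(q^\perp)=\tau(\mathbf r(g^\perp y))=\tau(\mathbf l(g^\perp y))\leq\tau(g^\perp)$, and $g^\perp yq=0$ forces $y(g\wedge q)=g\,y\,g\,(g\wedge q)$, hence $\|y(g\wedge q)\|_\ii\leq\dt$. Since this cut must be performed separately for each operator $y=M_n(\cdot)$, and each cut costs an extra projection with complement of trace at most $\tau(g^\perp)$, one has to arrange in advance that these traces are summable over $n$. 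This is why the paper applies b.u.e.m.\ with the doubly-indexed parameters $(\ep/2^{n+k+1},\dt)$ rather than with a single call, and why it keeps $\dt$ fixed (your shrinking of $\dt_k$ plays no role in the actual argument).
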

\begin{proof}
Since $\| z_m\| \to 0$  and $\{M_n\}$  is b.u.e.m. at zero in $(X, \|\cdot\|)$, for every $n,k\in \mathbb N$
there exist $z_{n,k} \in \{z_{m}\}$ and $g_{n,k} \in \mc P(\mc M)$ such that
$$
\tau(g_{n,k}^\perp)\leq \ \frac \ep{2^{n+k+1}}  \text{ \ \ and \ \ } \sup_m\| g_{n,k} M_m(z_{n,k})g_{n,k}\|_\ii\leq \dt.
$$
 In particular,
$$
\| g_{n,k} M_n(z_{n,k})g_{n,k}\|_\ii\leq \dt \text{ \ \ for all\ \ }n,k.
$$

If $\mb l(y)$ ($\mb r(y)$) is the left (respectively, right) support of an operator $y\in L^0$
and $q_{n,k}=\mb 1-\mb r(g_{n,k}^\perp M_n(z_{n,k}))$, then
$$
\tau(q_{n,k}^\perp)=\tau\left (\mathbf r(g_{n,k}^\perp M_n(z_{n,k}))\right )=
\tau\left (\mathbf l (g_{n,k}^\perp M_n(z_{n,k}))\right )\leq \tau(g_{n,k}^\perp)\leq \frac \ep{2^{n+k+1}}.
$$
Besides,
$$
 M_n(z_{n,k}) q_{n,k} =g_{n,k} M_n(z_{n,k}) q_{n,k} +g_{n,k}^\perp  M_n(z_{n,k}) q_{n,k}
= g_{n,k}  M_n(z_{n,k}) q_{n,k}.
$$
Therefore, letting $e_{n,k} = g_{n,k} \land q_{n,k}$, we obtain $\tau(e_{n,k}^\perp)\leq  \frac \ep{2^{n+k}}$ and
$$
M_n(z_{n,k}) e_{n,k}= M_n(z_{n,k})q_{n,k} e_{n,k} = g_{n,k}M_n(z_{n,k}) q_{n,k}e_{n,k}
= g_{n,k}M_n(z_{n,k}) g_{n,k} e_{n,k},
$$
implying that
$$
\| M_n(z_{n,k}) e_{n,k}\|_\ii\leq \|g_{n,k}M_n(z_{n,k}) g_{n,k} \|_\ii\leq \dt
$$
for all $n$ and $k$.

If $e=\bigwedge\limits_{n,k} e_{n,k}$, then we have
$$
\tau(e^\perp)\leq \ep \text{ \ \ and \ \ } \sup_n\| M_n(z_{n,k}) e\|_\ii\leq \dt \text{ \ \ for all \ } k,
$$
and the result follows.
\end{proof}

We will also need the next two lemmas.
\begin{lm}\label{l333}
Let $1\leq p\leq\ii$, and let $t>0$. If $x \in L^p$, then
\[
\lim\limits_{s\to t}\| A_t(x)-A_s(x)\|_p\to 0.
\]
\end{lm}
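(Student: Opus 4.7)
The proof is essentially an elementary computation that avoids invoking any strong continuity of the semigroup. Fix $t>0$ and $x\in L^p$. Without loss of generality, take $0 < s < t$; the case $s>t$ will be handled symmetrically and give an analogous bound. The key idea is to add and subtract an ``intermediate'' average so as to separate the two sources of discrepancy between $A_t(x)$ and $A_s(x)$: the different domains of integration $[0,t]^d$ and $[0,s]^d$, and the different normalising volumes $t^d$ and $s^d$.

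Concretely, adding and subtracting $\frac{1}{t^d}\int_{[0,s]^d}T_{\mb u}(x)\,d\mb u$ one obtains
\[
A_t(x) - A_s(x) = \frac{1}{t^d}\int_{[0,t]^d\sm [0,s]^d} T_{\mb u}(x)\, d\mb u + \left(\frac{1}{t^d} - \frac{1}{s^d}\right)\int_{[0,s]^d} T_{\mb u}(x)\, d\mb u.
\]
Invoking the contractivity $\|T_{\mb u}(x)\|_p \leq \|x\|_p$, which holds for every $T_{\mb u}\in DS^+$ and every $1\leq p\leq\ii$, the triangle inequality for the (Bochner) integrals produces
\[
\|A_t(x) - A_s(x)\|_p \leq \frac{t^d - s^d}{t^d}\|x\|_p + \left|\frac{1}{t^d} - \frac{1}{s^d}\right| s^d \|x\|_p = \frac{2(t^d - s^d)}{t^d}\|x\|_p,
\]
since $|t^{-d}-s^{-d}|s^d = (t^d-s^d)/t^d$ for $s<t$. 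The case $s>t$ gives the symmetric estimate $\frac{2(s^d-t^d)}{s^d}\|x\|_p$. In either direction the right-hand side tends to $0$ as $s\to t$ by continuity of $r\mapsto r^d$, which completes the argument.

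The proof presents essentially no obstacle; the only subtle point worth noting is the interpretation of the integral defining $A_t(x)$ when $p=\ii$, since the semigroup need not be strongly continuous on $\mc M$. However, the contraction bound $\|T_{\mb u}(x)\|_p\leq\|x\|_p$ is valid in all cases, so the displayed estimates remain correct regardless of the precise sense in which the integral is understood.
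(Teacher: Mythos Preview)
Your proof is correct and follows essentially the same approach as the paper's own argument: both split $A_t(x)-A_s(x)$ into the two pieces corresponding to the difference of integration domains and the difference of normalising constants, then use the contractivity $\|T_{\mb u}(x)\|_p\leq\|x\|_p$ to arrive at the bound $2(t^d-s^d)t^{-d}\|x\|_p$ for $s<t$, with the case $s>t$ handled symmetrically. Your added remark about the interpretation of the integral for $p=\ii$ is a reasonable caveat, though the paper does not discuss it.
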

\begin{proof}
If $0<s< t$, then
\begin{equation*}
\begin{split}
\| A_t(x)-A_s(x)\|_p &\leq\bigg\|\bigg(\frac 1{t^d}-\frac 1{s^d}\bigg)\int\limits_{[0,s]^d}T_{\mb u}(x)d\mb u \bigg\|_p+ \bigg\|\frac 1{t^d}\int\limits_{[0,t]^d \setminus[0,s]^d}T_{\mb u}(x)d\mb u\bigg\|_p \\
&\leq \bigg(\frac 1{s^d}-\frac 1{t^d}\bigg)\int\limits_{[0,s]^d}\|T_{\mb u}(x)\|_p d\mb u + \frac 1{t^d}\int\limits_{[0,t]^d \setminus[0,s]^d}\|T_{\mb u}(x)\|_p d\mb u \\
&\leq 2\frac{t^d-s^d}{t^d}\|x\|_p,
\end{split}
\end{equation*}
implying that $\lim\limits_{s\to t^-}\| A_t(x)-A_s(x)\|_p\to 0$. Convergence $\lim\limits_{s\to t^+}\| A_t(x)-A_s(x)\|_p\to 0$ is proved similarly.
\end{proof}

\begin{lm}\label{l32}
Let $t_n\to 0^+$.
\begin{enumerate}[(i)]
\item If $1<p<\ii$, then $A_{t_n}(x)\to x$ a.u. for all $x\in L^p$;
\item $A_{t_n}(x)\to x$ b.a.u. for all $x\in L^1$.
\end{enumerate}
\begin{proof}
(i) By Theorem \ref{t33b}, we have $\|A_{t_n}(x)-x\|_p\to 0$ for each $x\in L^p$. This implies that the set
$$
\mc D=\{A_{t_n}(x):\ n\in \mathbb N, \ x\in L^p\cap \mc M\}
$$
is dense in $L^p\cap \mc M$, hence in $L^p$.

By Lemma \ref{l33}, $\|A_{t_n}(y)-y\|_\ii\to 0$  whenever $y\in\mc D$.
Consequently, $A_{t_n}(y)\to y$ a.u. for each $y\in \mc D$.
Since, in view of Remark \ref{r31}, the sequence $\{A_{t_n}\}$ is b.u.e.m. at zero on $L^p$, it follows by \cite[Proposition 3.1]{cl3}
(with $X=L^p$ and $M_n=A_{t_n}$)  that the set $\{ x \in L^p:\ \{A_{t_n}(x)\} \text{\ converges a.u.}\}$ is closed in $L^p$. As $\mc D$ is dense in $L^p$, we conclude that the sequence $\{A_{t_n}(x)\}$ converges a.u.
for each $x\in L^p$. Now, given $x\in L^p$, $A_{t_n}(x)\to x$ in $L^p$ entails that  $A_{t_n}(x)\to x$ in measure.
Therefore $A_{t_{n_k}}(x)\to x$ a.u. for some subsequence $\{t_{n_k}\}$, implying that $A_{t_n}(x)\to x$ a.u.

(ii) Let $x\in L^1$, $1<p<\ii$, and let $\{x_m\}\su L^p$ be such that $\|x-x_m\|_1\to 0$. Since $x_m\to x$
in measure, we can assume without loss of generality that $x_m\to x$ a.u. also. Fix $\ep>0$ and $\dt>0$. There is
$f\in \mc P(\mc M)$ with $\tau(f^\perp)\leq \frac \ep3$ and $N_1\in \mathbb N$ such that
$$
\|(x-x_m)f\|_\ii<\frac\dt3 \text{ \ \ for all\ \ } m\ge N_1.
$$
Next, by Theorem \ref{t32}, the sequence $\{A_{t_n}\}$ is b.u.e.m. at zero on $L^1$. This implies that there exists
$N_2\in \mathbb N$ such that $m\ge N_2$ implies that there is $h\in \mc P(\mc M)$ such that $\tau(h^\perp)\leq\frac\ep3$
and
$$
\sup_n\|h(A_{t_n}(x-x_m)h\|_\ii<\frac\dt3  \text{ \ \ for all\ \ } m\ge N_2.
$$
Let $m_0\ge \max\{N_1,N_2\}$. Since, in view of (i), $A_{t_n}(x_{m_0})\to x_{m_0}$ a.u., it follows that there is
$g\in \mc P(\mc M)$ with $\tau(g^\perp)\leq \frac\ep3$ and $N\in \mathbb N$ satisfying
$$
\|(A_{t_n}(x_{m_0})-x_{m_0})g\|_\ii< \frac\dt3 \text{ \ \ for all\ \ } n\ge N.
$$
Now, letting $e=f\land g\land h$, we obtain $\tau(e^\perp)\leq\ep$ and
\begin{equation*}
\begin{split}
\|e(A_{t_n}(x)-x)e\|_\ii&\leq \|eA_{t_n}(x-x_{m_0})e\|_\ii+\|e(A_{t_n}(x_{m_0})-x_{m_0})e\|_\ii\\
&+\|e(x-x_{m_0})e\|_\ii<\dt
 \end{split}
\end{equation*}
whenever $n\ge N$, which implies that $A_{t_n}(x)\to x$ b.a.u.
\end{proof}
\end{lm}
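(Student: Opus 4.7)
The plan is to prove (i) first by a Banach-principle / closedness argument on $L^p$, and then to bootstrap from (i) to prove (ii) via $L^1$-approximation by $L^p$-elements.

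For (i), I would identify a norm-dense subset of $L^p$ on which a.u.\ convergence is immediate. A natural choice is $\mathcal D=\{A_s(y):\ s>0,\ y\in L^p\cap \mc M\}$: Theorem \ref{t33b} applied in $L^p$ shows that $\mathcal D$ is dense in $L^p\cap \mc M$, which is in turn $L^p$-dense, and for $z=A_s(y)\in\mathcal D$ the second part of Lemma \ref{l33} gives $\|A_t(z)-z\|_\ii\to 0$, which is stronger than a.u.\ convergence. Next, Remark \ref{r31} extends the maximal inequality of Theorem \ref{t32} to $L^p$, and a routine decomposition of an element into positive--negative and real--imaginary parts shows that $\{A_{t_n}\}$ is b.u.e.m.\ at zero on $L^p$. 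The closedness principle of \cite[Proposition 3.1]{cl3}, applied with $X=L^p$ and $M_n=A_{t_n}$, then guarantees that the set of $x\in L^p$ for which $\{A_{t_n}(x)\}$ converges a.u.\ is norm-closed; combined with the density of $\mathcal D$ this gives a.u.\ convergence on all of $L^p$. The limit must coincide with $x$ because Theorem \ref{t33b} gives $A_{t_n}(x)\to x$ in $L^p$, hence in measure.

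For (ii), I would fix $x\in L^1$ and any $1<p<\ii$, choose $\{x_m\}\su L^p\cap L^1$ with $\|x-x_m\|_1\to 0$, and, after passing to a subsequence, arrange $x_m\to x$ a.u.\ as well (possible since $L^1$-convergence entails convergence in measure). Given $\ep,\dt>0$, I would produce three projections, each with complement of trace at most $\ep/3$: a projection $f$ with $\|(x-x_{m_0})f\|_\ii<\dt/3$ (a.u.\ closeness); a projection $h$ with $\sup_n\|hA_{t_n}(x-x_{m_0})h\|_\ii<\dt/3$ (from b.u.e.m.\ of $\{A_{t_n}\}$ on $L^1$, deduced from Theorem \ref{t32} after the same decomposition used in (i)); and a projection $g$ with $\|(A_{t_n}(x_{m_0})-x_{m_0})g\|_\ii<\dt/3$ for all sufficiently large $n$ (from part (i) applied to $x_{m_0}\in L^p$). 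Setting $e=f\land g\land h$ and splitting
\[
e(A_{t_n}(x)-x)e=eA_{t_n}(x-x_{m_0})e+e(A_{t_n}(x_{m_0})-x_{m_0})e+e(x_{m_0}-x)e
\]
assembles to $\|e(A_{t_n}(x)-x)e\|_\ii<\dt$ for $n$ large, which is b.a.u.\ convergence.

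The main obstacle I anticipate is verifying b.u.e.m.\ of $\{A_{t_n}\}$ for general, not necessarily positive, elements: Theorem \ref{t32} is stated only on the positive cone, so extending it requires a careful four-part decomposition that degrades constants but preserves the quantitative statement. Beyond that, the proof is essentially bookkeeping. A smaller but worth-noting point is that the particular choice of the dense set $\mathcal D$ in (i) is what lets Lemma \ref{l33} deliver uniform-norm (and hence a.u.) convergence directly on $\mathcal D$, sparing any further appeal to maximal inequalities at that stage and cleanly separating the density step from the closedness step.
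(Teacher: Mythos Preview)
Your proposal is correct and follows essentially the same route as the paper's own proof: the same dense set $\mathcal D$ (the paper takes $s$ from the given sequence $\{t_n\}$ rather than all $s>0$, an immaterial difference), the same appeal to Lemma \ref{l33} for uniform-norm convergence on $\mathcal D$, the same closedness principle from \cite[Proposition 3.1]{cl3} via Remark \ref{r31}, and in part (ii) the same three-projection splitting based on a.u.\ approximation, b.u.e.m.\ on $L^1$ from Theorem \ref{t32}, and part (i). Your remark about lifting the maximal inequality from the positive cone to general elements is a point the paper leaves implicit (subsumed in the reference behind Remark \ref{r31}); it is indeed routine via the real--imaginary and positive--negative decomposition, as you indicate.
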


Now we give an improvement of \cite[Theorem 6.8 i) b)]{jx}; see also Remarks after \cite[Theorem 6.8]{jx}.

\begin{teo}\label{t33} For every $x\in L^1$ the averages (\ref{eq31})
converge a.u. to $x$ \ as \ $t \to 0$.
\end{teo}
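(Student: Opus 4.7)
The plan is to adapt the proof of Lemma \ref{l32}(ii) --- which delivers only b.a.u.\ convergence in $L^1$ --- by replacing every invocation of the two-sided maximal estimate of Theorem \ref{t32} with the one-sided estimate produced by Lemma \ref{l31}. This substitution is precisely what upgrades b.a.u.\ to a.u.

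I first fix an arbitrary sequence $t_n \to 0^+$ and prove $A_{t_n}(x) \to x$ a.u. Given $x \in L^1$, I approximate it by $x_m \in L^p \cap \mc M$ for some $p \in (1,\ii)$ (for instance, by spectral truncation of the real and imaginary parts of $x$) with $\|x - x_m\|_1 \to 0$; after passing to a subsequence I may also arrange $x - x_m \to 0$ a.u. Fix $\ep, \dt > 0$. Since $\{A_{t_n}\}$ is b.u.e.m.\ at zero on $L^1$ by Theorem \ref{t32}, applying Lemma \ref{l31} to a sufficiently late tail $\{x - x_m\}_{m \ge M}$ produces $m_0 \ge M$ and $e_1 \in \mc P(\mc M)$ with $\tau(e_1^\perp) \le \ep/3$ and $\sup_n \|A_{t_n}(x - x_{m_0}) e_1\|_\ii \le \dt/3$. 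For $M$ chosen large enough the a.u.\ convergence $x - x_m \to 0$ also furnishes $e_2$ with $\tau(e_2^\perp) \le \ep/3$ and $\|(x - x_{m_0}) e_2\|_\ii \le \dt/3$, while Lemma \ref{l32}(i) applied to $x_{m_0} \in L^p$ gives $e_3$ with $\tau(e_3^\perp) \le \ep/3$ and $\|(A_{t_n}(x_{m_0}) - x_{m_0}) e_3\|_\ii \to 0$.

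Setting $e = e_1 \wedge e_2 \wedge e_3$, the triangle inequality
\[
\|(A_{t_n}(x) - x) e\|_\ii \le \|A_{t_n}(x - x_{m_0}) e\|_\ii + \|(A_{t_n}(x_{m_0}) - x_{m_0}) e\|_\ii + \|(x - x_{m_0}) e\|_\ii
\]
yields $\limsup_n \|(A_{t_n}(x) - x) e\|_\ii \le 2\dt/3$. A standard diagonal argument --- executing the construction above with parameters $(\ep/2^k, 1/k)$ and setting $\wt e = \bigwedge_k e_k$ --- then produces a single projection with $\tau(\wt e^\perp) \le \ep$ and $\lim_n \|(A_{t_n}(x) - x) \wt e\|_\ii = 0$. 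To pass from a sequence to the continuous parameter $t \to 0^+$, I repeat the same construction with Lemma \ref{l31} applied to the countable family $\{A_r : r \in \mathbb Q \cap (0,1]\}$, obtaining a uniform rational bound $\sup_r \|A_r(x - x_{m_0}) e_1\|_\ii \le \dt/3$, and then extend to arbitrary $t \in (0,1]$ using Lemma \ref{l333} together with the lower semicontinuity of the uniform norm under convergence in measure.

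The principal obstacle is the upgrade of the tail estimate from the two-sided $\|e A_{t_n}(x - x_{m_0}) e\|_\ii$-bound (the direct output of Theorem \ref{t32}, which alone produces only b.a.u.\ convergence) to the one-sided $\|A_{t_n}(x - x_{m_0}) e\|_\ii$-bound required for a.u.\ convergence. Recognizing Lemma \ref{l31} as the precise abstract device that effects this passage in the $(L^1, \|\cdot\|_1)$ setting, and verifying that the b.u.e.m.\ hypothesis is supplied by Theorem \ref{t32}, is the decisive ingredient of the proof.
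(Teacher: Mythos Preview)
Your approach is essentially the same as the paper's: both rest on the observation that Lemma~\ref{l31} upgrades the two-sided maximal inequality of Theorem~\ref{t32} to a one-sided bound, on spectral truncation $x=x_{m_0}+z_{m_0}$ with $x_{m_0}\in L^1\cap\mc M$, and on the passage from the countable family $\{A_r:r\in\mathbb Q_+\}$ to arbitrary $t$ via Lemma~\ref{l333}.

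The one organizational difference worth noting is the treatment of the ``good'' piece $x_{m_0}$. You invoke Lemma~\ref{l32}(i), which yields $A_{t_n}(x_{m_0})\to x_{m_0}$ a.u.\ only along a \emph{prescribed} sequence $t_n\to 0^+$; when you then ``repeat the construction'' over the rationals, you still need a single projection $e_3$ that controls $\|(A_t(x_{m_0})-x_{m_0})e_3\|_\ii$ for \emph{all} $t>0$ simultaneously, and your sketch does not explain how this projection is produced. The paper handles this by first proving the continuous-parameter statement separately for $x\in L^1\cap\mc M$: since such $x$ is bounded, the elementary estimate
\[
\|A_t(x)-A_{[1/t]^{-1}}(x)\|_\ii\longrightarrow 0\quad(t\to 0)
\]
together with $A_{1/n}(x)\to x$ a.u.\ (Lemma~\ref{l32}(i)) immediately gives $A_t(x)\to x$ a.u.\ as a net. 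Your route can be completed the same way (or, equivalently, by applying Lemma~\ref{l333} with $p=\ii$ to $x_{m_0}\in\mc M$), but as written the step is missing. A second, cosmetic difference: the paper shows the net $\{A_t(x)\}$ is a.u.\ Cauchy, appeals to completeness of $L^0$ with respect to a.u.\ convergence, and then identifies the limit via Lemma~\ref{l32}(ii); you instead bound $\|(A_t(x)-x)e\|_\ii$ directly by three terms, which requires the extra (harmless) step of arranging $x-x_m\to 0$ a.u.
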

\begin{proof}
Show first that $A_t(x)\to x$ a.u. if $x\in L^1\cap \mc M$. Fix $1<p<\ii$.
Since $L^1\cap \mc M\su L^p$, Lemma \ref{l32} implies that $A_{1/n}(x)\to x$ a.u.
In particular,
\begin{equation}\label{e9}
A_{[1/t]^{-1}}(x)\to x \text{ \ a.u. \ as \ }t\to 0.
\end{equation}
Next, we have
\begin{equation}\label{e8}
\begin{split}
\| A_t(x)&-A_{[1/t]^{-1}}(x)\|_\ii
=\left \| \left [\frac 1t\right ]^d\int_{[0, [1/t]^{-d}]}T_{\mb u}(x)d\mb u-
\frac 1{t^d}\int_{[0,t^d]}T_{\mb u}(x)d\mb u \right \|_\ii \\
&=\left \| \left [\frac 1t\right ]^d\int_{[0, [1/t]^{-d}]\sm [0,t^d]}T_{\mb u}(x)d\mb u+
\left (\left [\frac 1t\right ]^d-\frac 1{t^d}\right )\int_{[0,t^d]}T_{\mb u}(x)d\mb u \right \|_\ii \\
&\leq \left [\frac 1t \right ]^d \left ( \left [\frac 1t \right ]^{-d}-t^d\right )\| x\|_\ii+
\left (\left [\frac 1t\right ]^d -\frac 1{t^d} \right )t^d\| x\|_\ii\to 0
\end{split}
\end{equation}
as $t\to 0$. Now, by (\ref{e9}), given $\ep>0$, there exists $e\in \mc P(\mc M)$ such that
$\tau(e^\perp)\leq \ep$ and
$$
 \|(A_{[1/t]^{-1}}(x)-x)e\|_\ii\to 0 \text{ \ \ as \ }t\to 0.
$$
Therefore, taking into account (\ref{e8}), we obtain
$$
\|(A_t(x)-x)e\|_\ii\leq \| (A_t(x)-A_{[1/t]^{-1}}(x))e\|_\ii+\|(A_{[1/t]^{-1}}(x)-x)e\|_\ii\to 0
$$
as $t\to 0$, hence $A_t(x)\to x$ a.u.

Let $x\in L^1_+$, and let $\{e_\lb\}_{\lb\ge 0}$ be its spectral family. If $x_m=\int_0^m\lb de_\lb$ and
$z_m=x-x_m$ for a positive integer  $m$, then $\{x_m\}\su L^1_+\cap \mc M$, $\{ z_m\}\su L^1_+$
and $\|z_m\|_1\to 0$.

Fix $\ep>0$ and $\dt>0$.
If $\{r_n\}$ is a sequence of all positive rational numbers, then, by Theorem \ref{t32}, the sequence $\{A_{r_n}\}$
is b.u.e.m. at zero on $L^1_+$, hence on $L^1$. Then, applying Lemma \ref{l31},
we find a projection $e\in \mc P(\mc M)$ and $z_{m_0}\in \{z_m\}$ such that
$$
\tau(e^\perp)\leq \frac \ep2 \text{ \ \ and \ \ } \sup_n\|A_{r_n}(z_{m_0})e\|_\ii <\frac \dt3.
$$

If $t>0$, then $r_{n_k}\to t$ for some subsequence $\{r_{n_k}\}$, so $\|A_t(z_{m_0})-A_{r_{n_k}}(z_{m_0})\|_1\to 0$ by Lemma \ref{l333}. Therefore $A_{r_{n_k}}(z_{m_0})\to A_t(z_{m_0})$ in measure, which implies that there is a subsequence $\{r_{n_{k_l}}\}$ such that $A_{n_{k_l}}(z_{m_0})\to A_t(z_{m_0})$ a.u.

Since $\|A_{n_{k_l}}(z_{m_0})e\|_\ii<\frac \dt 3$ for each $l$, it follows from \cite[Lemma 5.1]{cl3} that
\begin{equation}\label{eq2}
\sup_{t>0}\|A_t(z_{m_0})e\|_\ii \leq \frac \dt 3.
\end{equation}
Because $x_{m_0} \in  L^1 \cap \mc M$, we have $A_{t}(x_{m_0} )\to x_{m_0}$ a.u., so
the net $A_{t}(x_{m_0})$ is a.u. Cauchy.
Therefore, there exist $g \in \mc P(\mc M)$ and $t_0 > 0$ such that
\begin{equation}\label{eq3}
\tau(g^\perp)< \frac {\varepsilon}{2} \text { \ \ and \ \ }
\|(A_t( x_{m_0})- A_{t'}( x_{m_0}))g\|_\ii < \frac {\delta}{3}.
\end{equation}
for all $0 < t, t' < t_0$.

If $h = e \wedge g$, then $\tau(h^\perp)< \ep $ and, in view of (\ref{eq2})  and (\ref{eq3}), we have
\begin{equation*}
\begin{split}
\| (A_t(x)- A_{t'}(x))h\|_\ii &\leq \| (A_t(x_{m_0})- A_{t'}( x_{m_0}))h\|_\ii \\
&+\| A_t(z_{m_0})h\|_\ii + \| A_{t'}( z_{m_0})h\|_\ii < \delta.
\end{split}
\end{equation*}
Thus, the net $A_{t}(x)$ is a.u. Cauchy as $t \to 0$. By the proof of Theorem 2.3
in \cite{cls}, $L^0$ is complete with respect to a.u. convergence.  Therefore,  the net  $\{A_t(x)\}$ converges a.u. in $L^0$. Taking into account that, by Lemma \ref{l32}, $A_{1/n}(x)\to x$ b.a.u.,
we conclude that $A_t(x)\to x$ a.u. as $t\to 0$ for all $x\in L^1_+$, hence for all $x\in L^1$.
\end{proof}

\begin{rem}
In view of Lemma \ref{l32}, it is clear that the assertion of Theorem \ref{t33} holds for all $1\leq p<\ii$.
Alternatively, see the proof of Theorem \ref{t34} below.
\end{rem}

\section{Extension to Fully Symmetric Spaces of Measurable Operators}
The  {\it non-increasing rearrangement} of $x \in L^0$ is defined as
$$
\mu_t(x)=\inf\{\lambda>0: \ \tau\{|x|>\lambda\}\leq t\}, \ \ t> 0,
$$
where $|x|=(x^*x)^{1/2}$, the absolute value of $x$ (see, for example, \cite{fk}).

Let $L^0_\tau$ be the $*$-subalgebra of $L^0$ consisting of such $x \in L^0$ that
$\tau\{|x|>\lambda\}<\ii$ for some $\lambda >0$. The collection of sets
$$
\mc N(\ep,\dt) =\left\{x \in L^0(\tau):\ \mu_\dt (x) \le \ep\right\}, \ \ \ep>0, \ \dt>0.
$$
forms a basis of neighborhoods of zero for the measure topology $t_\tau$ in $L^0_\tau$. Note that $\mc M$
is dense in $(L^0_\tau, t_\tau)$ \cite{ne}.

A non-zero linear  subspace  $E\su L^0_\tau$ with a Banach norm  $\| \cdot \|_E$ is called {\it fully symmetric} if conditions
\[
x\in E, \ y\in L^0_\tau, \ \int \limits_0^s\mu_t(y)dt\leq  \int \limits_0^s\mu_t(x)dt \text{ \ for all\ }s>0 \ \ (\text{writing} \ \ x \prec\prec y)
\]
imply that $y\in E$ and $\| y\|_E\leq \| x\|_E$.

Let $L^0(0,\ii)$ be the linear space of (equivalence classes of) almost everywhere finite complex-valued
Lebesgue measurable functions on the interval $(0,\ii)$. We identify $L^{\ii}(0,\ii)$ with the commutative
von Neumann algebra acting on the Hilbert space $L^2(0,\ii)$ via
multiplication by the elements from $L^{\ii}(0,\ii)$ with the trace given by the integration
with respect to the Lebesgue measure $\mu$. A   fully symmetric space $E\su L^0(L^\ii(0,\ii), \nu)$, where
the trace $\nu$ is given by the Lebesgue integral with respect to measure $\mu$, is called a
{\it fully symmetric function space} on $(0,\ii)$ (see, for example, \cite{kps}).

If $E=E(0,\ii)$ is a fully symmetric function space, define
\[
E(\mc M)=E(\mc M, \tau)=\{ x\in L^0: \ \mu_t(x)\in E\}
\]
and set
\[
\| x\|_{E(\mc M)}=\| \mu_t(x)\|_E,  \ x\in E(\mc M).
\]
It is shown in \cite{ks} that $(E(\mc M), \| \cdot \|_{E(\mc M)})$ is a fully symmetric space.
If $1\leq p<\ii$ and $E=L^p(0,\ii)$, the space $(E(\mc M), \| \cdot \|_{E(\mc M)})$ coincides with the noncommutative
$L^p$-space $L^p=L^p(\mc M)=(L^p(\mc M, \tau), \| \cdot \|_p)$ \ \cite{ye0}.
 In addition, $L^{\infty}(\mc M)=\mc M$ and
\[
(L^1\cap L^\ii)(\mc M)=L^1(\mc M)\cap\mc M \text{ \ \ with \ \ } \|x\|_{L^1\cap\mc M}=\max\left\{ \|x\|_1,\|x\|_\ii\right\},
\]
\[
(L^1 + L^\ii)(\mc M) = L^1(\mc M) + \mc M \text{ \ \ with \ \ }
\]
\[
\|x\|_{L^1+\mc M}=\inf \left \{ \|y\|_1+ \|z\|_{\infty}: \ x = y + z, \ y \in L^1(\mc M), \ z \in \mc M \right \}=
\int_0^1 \mu_t(x) dt
\]
(see \cite[Proposition 2.5]{ddp}).

Let us notice that if $x \in L^1+\mc M$, then there exists $\lb>0$ such that  $x\, \{|x|>\lb\}\in L^1(\mc M)$.

It is known that a  fully   symmetric  space  $(E(\mc M), \| \cdot \|_{E(\mc M)})$ is an exact interpolation space for the Banach couple $(L^1(\mc M),\mc M)$  \cite{ddp1}. Therefore $T(E(\mc M))\su E(\mc M)$ and $\| T\|_{{E(\mc M)}\to{E(\mc M)}} \leq 1$ for every fully symmetric space $E(0,\ii)$ and any $T\in DS$.

We say that a positive linear operator $T:L^1\to L^1$ is an {\it absolute contraction} and write $T\in AC^+$ if
\[
\| T(x)\|_1\leq \| x\|_1 \ \ \forall \ x\in L^1 \text{ \  and \ } \| T(x)\|_\ii\leq \|x\|_\ii \ \ \forall \ x \in  L^1\cap\mc M.
\]
It is clear that  if $T\in DS^+$, then $T| L^1\in AC^+$. It turns out that any $T\in AC^+$ can be uniquely extended to a positive Dunford-Schwartz operator:

\begin{pro}\label{p611} \cite[Proposition 1.1]{cl1}.
Given $T\in AC^+$, there exists a unique $\wt T \in DS^+$ such that $\wt T|L^1=T$ and $\wt T|\mc M$ is $\sigma(\mc M, L^1)$-continuous.
\end{pro}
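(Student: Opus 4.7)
The plan is to extend the restriction $T_0 := T|_{L^1 \cap \mc M}$ to a $\sigma(\mc M, L^1)$-continuous positive contraction $\wt T_1$ on $\mc M$, and then glue $T$ on $L^1$ with $\wt T_1$ on $\mc M$ to obtain the desired operator on $L^1 + \mc M$. The preliminary observation is that $T_0$ maps $L^1 \cap \mc M$ into itself and is contractive in both $\|\cdot\|_1$ and $\|\cdot\|_\ii$. A crucial normality property also holds: whenever $(x_\al) \su L^1 \cap \mc M_+$ is increasing with $x_\al \uparrow x \in L^1 \cap \mc M_+$, the equality $\|x - x_\al\|_1 = \tau(x - x_\al) \to 0$ follows from normality of $\tau$, so $T(x_\al) \to T(x)$ in $L^1$-norm, and positivity of $T$ then gives $T(x_\al) \uparrow T(x)$ in $\mc M_+$.

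To construct $\wt T_1$, fix $y \in \mc M_+$ and, by semifiniteness of $\tau$, choose an increasing net $(p_\al) \su \mc P(\mc M)$ with $\tau(p_\al) < \ii$ and $p_\al \uparrow \mb 1$. The net $y_\al := y^{1/2} p_\al y^{1/2}$ lies in $L^1 \cap \mc M_+$, is increasing, $\sigma$-strongly convergent to $y$, and satisfies $\|y_\al\|_\ii \leq \|y\|_\ii$. Define $\wt T_1(y) := \sup_\al T(y_\al) \in \mc M_+$. Independence of $(p_\al)$ is obtained by comparing a second net $(q_\bt)$ via the meets $p_\al \wedge q_\bt \in \mc P(\mc M)$: for fixed $\al$, the net $y^{1/2} (p_\al \wedge q_\bt) y^{1/2}$ is increasing in $\bt$, lies in $L^1 \cap \mc M_+$, converges to $y_\al$ in $L^1$-norm by the preparatory normality, and is dominated by $y^{1/2} q_\bt y^{1/2}$; passing to the $L^1$-limit yields $T(y_\al) \leq \sup_\bt T(y^{1/2} q_\bt y^{1/2})$, and symmetry gives equality of the two suprema. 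The same mechanism yields additivity on $\mc M_+$: the sum $y_1^{1/2} p_\al y_1^{1/2} + y_2^{1/2} p_\al y_2^{1/2}$ is an increasing $L^1 \cap \mc M_+$-valued approximation of $y_1 + y_2$, and a general version of the independence argument, applicable to any such approximating net via Radon-Nikodym decompositions $z = y^{1/2} c y^{1/2}$ for $0 \leq z \leq y$ combined with the meet trick, identifies its supremum under $T$ with $\wt T_1(y_1 + y_2)$, giving $\wt T_1(y_1 + y_2) = \wt T_1(y_1) + \wt T_1(y_2)$. Positive homogeneity is immediate, and linear extension via $y = (y_1 - y_2) + i(y_3 - y_4)$ produces a positive contraction $\wt T_1 : \mc M \to \mc M$. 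For $y \in L^1 \cap \mc M_+$, $y$ itself is an admissible approximant while $T(y_\al) \leq T(y)$, so $\wt T_1(y) = T(y)$ and $\wt T_1|_{L^1 \cap \mc M} = T_0$; the $\sigma(\mc M, L^1)$-continuity of $\wt T_1$ then reduces to normality on bounded increasing nets in $\mc M_+$, which follows from the same approximation technique.

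For $x \in L^1 + \mc M$, write $x = x_1 + x_2$ with $x_1 \in L^1$, $x_2 \in \mc M$, and set $\wt T(x) := T(x_1) + \wt T_1(x_2)$; this is well-defined because any two such decompositions differ by an element of $L^1 \cap \mc M$, on which $T$ and $\wt T_1$ agree. Positivity on $(L^1 + \mc M)_+$ uses the spectral splitting $x = x e_\lb + x e_\lb^\perp$ with $x e_\lb \in \mc M_+$ and $x e_\lb^\perp \in L^1_+$ noted just before the proposition, so $\wt T(x) = \wt T_1(x e_\lb) + T(x e_\lb^\perp) \geq 0$; the $DS^+$ norm bounds are inherited from $T$ and $\wt T_1$. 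Uniqueness: any alternative extension $\wt T'$ agrees with $T$ on $L^1$, hence with $T_0$ on $L^1 \cap \mc M$; since both $\wt T'|_{\mc M}$ and $\wt T|_{\mc M}$ are $\sigma(\mc M, L^1)$-continuous and coincide on the $\sigma$-weakly dense subspace $L^1 \cap \mc M \su \mc M$, they agree on $\mc M$, and therefore on $L^1 + \mc M$. The principal technical obstacle is the general independence-of-net argument underlying additivity of $\wt T_1$: one must transfer $\sigma$-strong convergence in $\mc M$ to $L^1$-norm convergence of the relevant approximants via normality of $\tau$ and then exploit the $L^1$-contractivity of $T$.
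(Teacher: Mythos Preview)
The paper does not supply a proof of this proposition at all: it is quoted directly from \cite[Proposition 1.1]{cl1} and invoked as a black box, with no argument reproduced. So there is no proof in the present paper to compare against.

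Your outline is the standard construction and is essentially correct. Extending $T|_{L^1\cap\mc M}$ to $\mc M$ by taking suprema of $T(y^{1/2}p_\al y^{1/2})$ over an increasing net of finite-trace projections, verifying normality, and then gluing with $T$ on $L^1$ via a common decomposition is precisely the expected route; the uniqueness argument via $\sigma(\mc M,L^1)$-density of $L^1\cap\mc M$ in $\mc M$ is also exactly right. You correctly identify the one genuine subtlety, namely showing that the supremum is independent of the approximating net so that additivity on $\mc M_+$ follows. One word of caution there: auxiliary nets such as $p_\bt c p_\bt$ are not in general monotone, so the ``meet trick'' does not transplant verbatim to the Radon--Nikodym setting; a clean workaround is to characterise $\wt T_1(y)$ weakly by the formula $\tau(x\,\wt T_1(y))=\lim_\al\tau(x\,T(y^{1/2}p_\al y^{1/2}))$ for all $x\in L^1_+$, observe that the right-hand side depends only on $y$ (not on the net) because $\tau$ is normal and $T$ is $L^1$-continuous, and then read off additivity in $y$ from linearity of $T$ together with the fact that $L^1$ separates $\mc M$. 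With that adjustment your argument goes through.
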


Recall that $\{ T_{\mb u}\}_{\mb u\in \mathbb R_+^d}\su DS^+$ is a strongly continuous semigroup on $L^1$ and that $A_t(x)=\frac1{t^d}\int_{[0,t]^d}T_{\mb u}(x)d\mb u \in L^1$ for all $x\in L^1$, $t >0$. Clearly, $\{A_t\}_{t>0}\su AC^+$. By Proposition \ref{p611}, given $t>0$, there exists a unique $\wt A_t \in DS^+$ such that $\wt A_t|L^1 = A_t$ and $\wt A_t|\mc M$ is $\sigma(\mc M, L^1)$-continuous. Let us denote this extension also by $A_t$.

Since a fully symmetric space $E=(E(\mc M), \| \cdot \|_{E(\mc M)})$ is an exact interpolation space for the Banach couple
$(L^1, \mathcal M)$, it now follows that $A_t(E)\su E$ and $\|A_t\|_{E \to E} \leq 1$ for every $t>0$.

Define
\[
\mathcal R_\tau= \{x \in L^1 + \mathcal M: \ \mu_t(x) \to 0 \text{ \ as \ } t\to \ii\}.
\]
In is known that $R_\tau$ is the closure of $ L^1\cap \mc M$ in $L^1 + \mc M$ in the measure topology $t_\tau$ \cite[Proposition 2.7]{ddp}. Since the convergence in the norm $\|\cdot\|_{L^1 + \mc M}$ implies the convergence with respect to $t_\tau$ \cite[Proposition 2.2]{ddp}, it follows that $\mc R_\tau$ is a closed subspace in
$(L^1 + \mc M, \|\cdot\|_{L^1 + \mc M})$. Consequently, $(\mc R_\tau, \|\cdot\|_{L^1+\mc M})$ is a Banach space.
Note that definitions of $\mc R_\tau$ and $\| \cdot \|_{L^1+L^\ii}$ yield that if
$$
x\in \mc R_\tau, \ y\in L^1+\mc M \text{ \ and \ } y \prec\prec x,
$$
then $y\in \mc R_\tau$ and $\| y\|_{L^1+\mc M} \leq \| x\|_{L^1+\mc M}$. Therefore
$(\mc R_\tau, \|\cdot\|_{L^1+\mc M})$ is a  fully symmetric space,  so $A_t(\mc R_\mu)\su \mc R_\mu$
and $\| A_t\|_{\mc R_\mu\to R_\mu}\leq 1$ for every $t>0$.

Furthermore, if $\tau(\mb 1)=\ii$, then a fully symmetric space $E\su L^1+\mc M$ is contained in $\mc R_\tau$ if and only if $\mathbf 1 \notin E$ \cite[Proposition 2.2]{cl3}. In particular, if $E(0,\infty)$ is a a separable fully symmetric function space
and $E(\mathcal M,\tau)$ is the corresponding noncommutative fully symmetric space, then $E(\mathcal M,\tau) \subseteq \mathcal R_\tau $. Note also that if $\tau(\mathbf 1) < \ii$, then $\mc M \su L^1$ and $\mathcal R_\tau = L^1$.

Here is an extension of Theorem \ref{t33} to $\mc R_\tau$:

\begin{teo}\label{t34}
Given $x\in \mathcal R_\tau$, the averages (\ref{eq31}) converge a.u. to $x$ as $t \to 0$.
\end{teo}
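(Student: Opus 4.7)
The plan is to reduce to Theorem \ref{t33} by showing that every $x \in \mc R_\tau$ admits, for any $\dt > 0$, a decomposition $x = w + v$ with $w \in L^1$ and $v \in \mc M$, $\| v \|_\ii \le \dt$. Combined with the $\mc M$-contractivity of the Dunford-Schwartz extension of $A_t$, this cleanly splits the estimate of $A_t(x) - x$ into a Theorem \ref{t33}-controlled part and a uniformly small part.

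To produce the decomposition I reduce to positive $x$ via the standard four-way split into real and imaginary, then positive and negative spectral, parts; each remains in $\mc R_\tau$ because $\mu_t(x^*) = \mu_t(x)$ and $\mu_t(y_\pm) \le \mu_t(y)$ for self-adjoint $y$. For $x \ge 0$, the observation recorded just before the theorem furnishes $\lb_0 > 0$ such that $y_0 := x e_{\lb_0} \in L^1$, where $e_{\lb_0} = \{x > \lb_0\}$. Setting $z_0 := x e_{\lb_0}^\perp$ gives $0 \le z_0 \le x$ and $\| z_0 \|_\ii \le \lb_0$; in particular $\mu_t(z_0) \to 0$, so the spectral projection $p := \{z_0 > \dt\}$ satisfies $\tau(p) < \ii$. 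Hence $z_0 p$ is bounded on a projection of finite trace and therefore lies in $L^1$, while $\| z_0 p^\perp \|_\ii \le \dt$. Taking $w := y_0 + z_0 p$ and $v := z_0 p^\perp$ proves the claim; the multiplicative loss from the four-way split is absorbed by starting with a smaller $\dt$.

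With this in hand, fix $\ep, \dt > 0$ and apply the decomposition with $\dt/3$, producing $w \in L^1$ and $v \in \mc M$ with $\| v \|_\ii \le \dt/3$. Theorem \ref{t33} supplies $e \in \mc P(\mc M)$ with $\tau(e^\perp) \le \ep$ and $t_0 > 0$ such that $\| (A_t(w) - w) e \|_\ii < \dt/3$ for all $0 < t < t_0$. Since $A_t$ contracts $\mc M$, $\| A_t(v) \|_\ii \le \| v \|_\ii \le \dt/3$, so
\[
\| (A_t(x) - x) e \|_\ii \le \| (A_t(w) - w) e \|_\ii + \| A_t(v) \|_\ii + \| v \|_\ii < \dt
\]
for all $0 < t < t_0$, which is precisely the a.u.\ convergence $A_t(x) \to x$ as $t \to 0^+$.

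The only substantive step is the decomposition; once it is in hand, the assembly is automatic from Theorem \ref{t33} and $\mc M$-contractivity. The defining property $\mu_t(x) \to 0$ of $\mc R_\tau$ enters exactly at the point of truncating $z_0$ by $\{z_0 > \dt\}$ to obtain a finite-trace projection, and this is where the hypothesis $x \in \mc R_\tau$ (rather than merely $x \in L^1+\mc M$) is decisive.
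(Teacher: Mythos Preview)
Your decomposition and overall strategy match the paper's exactly: split $x$ as an $L^1$ piece plus a uniformly small $\mc M$ piece, apply Theorem \ref{t33} to the former, and use the $\mc M$-contractivity of $A_t$ on the latter. (The paper writes the decomposition directly via the spectral resolution as $x_m=\int_{1/m}^\ii\lb\,de_\lb\in L^1$ and $y_m=\int_0^{1/m}\lb\,de_\lb$ with $\|y_m\|_\ii\le 1/m$; your two-step construction via $y_0, z_0, p$ lands in the same place.)

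There is, however, a genuine gap in your final line. The projection $e$ you obtain from Theorem \ref{t33} is chosen for the particular $w$, and $w$ depends on $\dt$. Hence what you have actually proved is: for every $\ep>0$ and $\dt>0$ there exist $e\in\mc P(\mc M)$ with $\tau(e^\perp)\le\ep$ and $t_0>0$ such that $\|(A_t(x)-x)e\|_\ii<\dt$ for $0<t<t_0$. This is \emph{not} the definition of a.u.\ convergence, which requires a single projection $e$, depending only on $\ep$, on which $\|(A_t(x)-x)e\|_\ii\to 0$. Your assertion ``which is precisely the a.u.\ convergence'' is therefore unjustified as written.

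The fix is routine and is exactly what the paper does: run your argument with $\ep$ replaced by $\ep/2^m$ and $\dt=1/m$, obtaining projections $e_m$ with $\tau(e_m^\perp)\le\ep/2^m$ and thresholds $t(m)>0$ such that $\|(A_t(x)-x)e_m\|_\ii<3/m$ for $0<t<t(m)$; then set $e=\bigwedge_m e_m$, so that $\tau(e^\perp)\le\ep$ and, since $e\le e_m$,
\[
\|(A_t(x)-x)e\|_\ii\le\|(A_t(x)-x)e_m\|_\ii<\frac3m\quad\text{for all }0<t<t(m),
\]
which gives $\|(A_t(x)-x)e\|_\ii\to 0$ as $t\to 0$. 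With this one additional paragraph your proof is complete and essentially identical to the paper's.
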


\begin{proof}
Without loss of generality assume that $x\ge 0$, and let $\{ e_\lb\}_{\lb\ge 0}$ be
the spectral family of $x$. Given $m \in \mathbb N$, denote $x_m= \int_{1/m}^\ii \lb de_\lb$
and $y_m = \int_0^{1/m} \lb de_\lb$. Then $0 \leq y_m \leq {\frac1m} \cdot \mathbf 1$, \ $x_m \in L^1$,
and $x = x_m + y_m$ for all $m$.

Fix $\ep > 0$. By Theorem \ref{t32}, $A_t(x_m) \to  x_m $ a.u. as $t \to 0$ for each $m$.
Therefore, there exists a sequence $\{e_m\}\su \mc P(\mc M)$ such that
$$
\tau(e_m^{\perp})\leq \frac \ep{2^m} \text{ \ \ and \ \ }
\| (A_t(x_m)- x_m)e_m\|_\ii\to 0 \text{ \ \ as \ \ } t \to 0.
$$
Then it follows that
$$
\|(A_t(x_m)-x_m)e_m\|_{\ii} < \frac1m \text{ \ \ for all \ \ }  0 < t < t(m).
$$
Since $\|y_m\|_{\ii} \leq \frac1m$, we have
\begin{equation*}
\begin{split}
\| (A_t(x)-x)e_m\|_{\ii}
&\leq \| (A_t(x_m)-x_m)e_m\|_{\ii} + \| (A_t(y_m)-y_m))e_m\|_{\ii} \\
&<\frac1m + \|A_t(y_m)e_m\|_{\ii} + \| y_me_m\|_{\ii} \leq {\frac3m}
\end{split}
\end{equation*}
for each $m$ and all $0 < t < t(m)$.

Now, if $e = \bigwedge \limits_m e_m$, then
$$
\tau(e^\perp) \leq \ep \text{ \ \ and \ \ } \| (A_t(x)-x)e\|_\ii< \frac3m \text{ \ \ for all \ \ } 0 < t < t(m).
$$
This means that $A_t(x)\to x$ a.u. as $t \to 0$.
\end{proof}

An application of Theorem \ref{t34} to a fully symmetric space yields the following.

\begin{teo}\label{t35}  Let $\tau(\mb 1)=\ii$, and
let $E\su L^1+\mc M$ be a fully symmetric space such that
$\mathbf 1 \notin E$. Then, given $x\in E$, the averages (\ref{eq31}) converge a.u. to $x$ as $t \to 0$.
\end{teo}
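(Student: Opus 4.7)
The plan is to reduce Theorem \ref{t35} immediately to Theorem \ref{t34} by using the structural characterization of fully symmetric subspaces of $\mc R_\tau$ recorded in the paragraph preceding Theorem \ref{t34}. Specifically, under the standing assumption $\tau(\mathbf 1) = \infty$, the citation to \cite[Proposition 2.2]{cl3} tells us that a fully symmetric space $E \subseteq L^1 + \mc M$ is contained in $\mc R_\tau$ if and only if $\mathbf 1 \notin E$. Both hypotheses of that equivalence are precisely the hypotheses of Theorem \ref{t35}.

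So the proof is essentially one line: since $\tau(\mathbf 1) = \infty$ and $\mathbf 1 \notin E$, we have the inclusion $E \subseteq \mc R_\tau$. Consequently, for any $x \in E$ we have $x \in \mc R_\tau$, and Theorem \ref{t34} applies to yield that the averages $A_t(x)$ defined by (\ref{eq31}) converge almost uniformly to $x$ as $t \to 0^+$.

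There is no substantive obstacle here; all of the analytic work (the maximal inequality of Theorem \ref{t32}, the $L^p$ mean convergence of Theorem \ref{t33b}, the passage from $L^1 \cap \mc M$ to $L^1$ via Lemmas \ref{l31}--\ref{l333}, and the truncation argument that extends a.u.\ convergence from $L^1$ to $\mc R_\tau$ in Theorem \ref{t34}) has already been carried out. The only thing to check, if one wishes to be fully self-contained, is that the extended operators $A_t$ acting on $E$ via the extension in Proposition \ref{p611} agree on $E \cap (L^1 + \mc M)$ with the averages considered in Theorem \ref{t34}; but this is immediate from the uniqueness part of Proposition \ref{p611} together with the fact that $E \subseteq \mc R_\tau \subseteq L^1 + \mc M$. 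Thus no further argument beyond invoking Theorem \ref{t34} is needed.
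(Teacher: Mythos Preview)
Your proposal is correct and matches the paper's approach exactly: the paper states Theorem \ref{t35} as an immediate application of Theorem \ref{t34}, relying on the fact (recorded just before Theorem \ref{t34} via \cite[Proposition 2.2]{cl3}) that $\tau(\mathbf 1)=\infty$ and $\mathbf 1\notin E$ force $E\subseteq\mc R_\tau$. No further argument is given or needed.
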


In particular, we have  the following.

\begin{teo}\label{t36}
Let $\tau(\mb 1)=\ii$, and let $E\subset L^1+\mc M$ be a fully symmetric space such that $\mathbf 1 \notin E$.
Then the averages $\frac 1t\int_0^tT_s(x)ds$ converge a.u. to $x$ as $t \to 0$ for every $x\in E$.
\end{teo}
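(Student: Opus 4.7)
The observation driving the proof is that Theorem \ref{t36} is nothing but the one-parameter specialization ($d=1$) of Theorem \ref{t35}. The entire paper is set up with a fixed but otherwise arbitrary $d\in\mathbb N$, so the choice $d=1$ is legitimate; in that case the semigroup $\{T_{\mb u}\}_{\mb u\in \mathbb R_+^d}\su DS^+$ is simply the one-parameter semigroup $\{T_s\}_{s\ge 0}\su DS^+$, and the multi-dimensional averages defined by (\ref{eq31}) collapse exactly to $A_t(x)=\frac 1t\int_0^t T_s(x)\,ds$.

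The plan is therefore to check that the hypotheses of Theorem \ref{t35} are met and then apply it verbatim. The assumption $\tau(\mb 1)=\ii$ is part of the statement of Theorem \ref{t36}, and the assumption that $E\su L^1+\mc M$ is a fully symmetric space with $\mb 1\notin E$ is also part of its statement. Strong continuity on $L^1$ of the semigroup is part of the paper's standing hypothesis, and Proposition \ref{p21} extends it to every $L^p$ as needed. No additional ingredient has to be built.

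I do not foresee a real obstacle here: the one-parameter case is genuinely easier than, and is subsumed by, the already established multi-parameter result. The substantive work resides upstream in Theorem \ref{t34} and in the continuous maximal inequality of Theorem \ref{t32}, whose proof via Brunel's auxiliary operator (\ref{e6}) is the key technical ingredient and which works uniformly in $d$. Theorem \ref{t36} merely records the classical one-parameter Wiener local ergodic theorem in the symmetric-space setting as an immediate corollary of Theorem \ref{t35}.
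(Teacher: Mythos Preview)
Your proposal is correct and matches the paper's own treatment exactly: the paper introduces Theorem \ref{t36} with the phrase ``In particular, we have the following'' and gives no separate proof, treating it as the $d=1$ specialization of Theorem \ref{t35}. Your recognition that the averages $\frac1t\int_0^t T_s(x)\,ds$ are precisely the averages \eqref{eq31} when $d=1$ is all that is required.
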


A  symmetric space $E\su L^1+\mc M$ is said to have {\it order continuous norm} if
\[
\| x_n\|_E\downarrow 0 \text{ \ \ whenever\ \ } 0 \leq x_n\in E \text{\ \ and\ \ } x_n\downarrow 0.
\]
If $E$  is a symmetric space  with order continuous norm, then
$\tau\{|x| > \lb\}< \ii$  for all $x\in E$ and $\lb > 0$, so $E\su\mc R_\mu $; in particular, $\mathbf 1\notin E$. It is clear that the spaces $L^p(\mc M)$, $1\leq p<\ii$, have order continuous norms.

\begin{lm}\label{l61}
If $E$ is a symmetric space with order continuous norm, then, given $\ep>0$, there exists $\dt> 0$ such that $\|e\|_E<\ep$ for every $e\in \mc P(\mc M)$ with $\tau(e)<\dt$.
\end{lm}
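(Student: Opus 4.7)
The plan is to argue by contradiction. Suppose the conclusion fails: then there exist $\ep_0 > 0$ and a sequence of projections $\{e_n\} \su \mc P(\mc M)$ with $\tau(e_n) < 2^{-n}$ yet $\|e_n\|_E \ge \ep_0$ for every $n$. The strategy is to manufacture out of this sequence a monotone decreasing sequence of projections in $E$ converging to $0$ whose $E$-norms stay bounded below, directly contradicting order continuity.

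The key construction is the tail supremum $r_k = \bigvee_{j \ge k} e_j$. These projections are non-increasing in $k$ and satisfy $\tau(r_k) \le \sum_{j \ge k} \tau(e_j) < 2^{1-k}$, so $\tau(r_k) \to 0$. Since $\tau$ is faithful, $\bigwedge_k r_k$ is a projection of trace $0$, hence equal to $0$, so $r_k \downarrow 0$ in $\mc P(\mc M)$.

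Next I would verify that $r_k \in E$ for all sufficiently large $k$. The hypothesis $e_1 \in E$ means $\chi_{(0, \tau(e_1))} = \mu_\cdot(e_1)$ lies in the associated function space $E(0, \infty)$. For any $k$ with $\tau(r_k) \le \tau(e_1)$ we have the pointwise bound $\mu_t(r_k) = \chi_{(0, \tau(r_k))}(t) \le \chi_{(0, \tau(e_1))}(t) = \mu_t(e_1)$, hence $r_k \prec\prec e_1$. Full symmetry of $E$ then yields $r_k \in E$ with $\|r_k\|_E \le \|e_1\|_E$.

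Finally, applying order continuity of $\|\cdot\|_E$ to the decreasing sequence $r_k \downarrow 0$ in $E$ gives $\|r_k\|_E \to 0$. But $0 \le e_k \le r_k$ implies $\mu_t(e_k) \le \mu_t(r_k)$ for every $t > 0$, and thus $\|r_k\|_E \ge \|e_k\|_E \ge \ep_0$, a contradiction. The only delicate step is guaranteeing $r_k \in E$, since small-trace projections need not automatically belong to a symmetric space (the fundamental function of $E$ can fail to be finite everywhere); this is rescued by using $e_1$ as a fixed element of $E$ that dominates every $r_k$ of sufficiently small trace in the submajorization sense.
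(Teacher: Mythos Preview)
Your proof is correct but proceeds along a different route from the paper's. The paper argues directly: it fixes an auxiliary sequence $\{e_n\}\su\mc P(\mc M)$ with $0<\tau(e_n)<\ii$ and $e_n\downarrow 0$ (so that $e_n\in L^1\cap\mc M\su E$), applies order continuity to obtain $\|e_{n_0}\|_E<\ep$ for some $n_0$, and then sets $\dt=\tau(e_{n_0})$; for any $e\in\mc P(\mc M)$ with $\tau(e)<\dt$ one has $\mu_t(e)=\chi_{[0,\tau(e))}\leq\chi_{[0,\dt)}=\mu_t(e_{n_0})$, hence $\|e\|_E\leq\|e_{n_0}\|_E<\ep$. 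You instead argue by contradiction and manufacture the monotone sequence out of the hypothetical bad projections via the tail suprema $r_k=\bigvee_{j\ge k}e_j$. Both routes rest on the same two ingredients---the identification $\mu_t(p)=\chi_{[0,\tau(p))}$ for finite-trace projections and order continuity---but the paper's version is shorter since it avoids the join construction and the estimate $\tau(\bigvee_j e_j)\leq\sum_j\tau(e_j)$. Your version, on the other hand, has the minor advantage that the decreasing sequence is produced from the problem data itself rather than posited in advance, and your care in checking $r_k\in E$ via $r_k\prec\prec e_1$ is exactly the same comparison the paper uses at its final step.
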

\begin{proof}
Let $\{e_n\}_{n=1}^\ii \su\mc P(\mc M)$ be such that $0<\tau(e_n)<\ii$ for every $n$ and $e_n\downarrow 0$. Then
$\{e_n\}_{n=1}^\ii\su L^1\cap \mc M \subset E$ and, by order continuity of the norm $\|\cdot\|_E$, we have $\|e_n\|_E \downarrow 0$. Consequently, $\|e_{n_0}\|_E<\ep$ for some $n_0$. Set $\delta = \tau(e_{n_0})$. Then if $e\in\mc P(\mc M)$ and $\tau(e)<\dt$, we obtain
\[
\mu_t(e) =\chi_{[0,\tau(e))} \leq \chi_{[0,\delta)}= \mu_t(e_{n_0}),
\]
hence $\|e\|_E \leq \|e_{n_0}\|_E<\ep$.
\end{proof}

Now, using Theorem \ref{t35}, we give an extension of Theorem \ref{t33b} to fully symmetric spaces with order continuous norms:

\begin{teo}\label{t37}
Let $(E,\|\cdot\|_{E})$ be a fully symmetric space  with order continuous norm. Then $\| A_t(x)-x\|_E\to 0$ as $t\to 0$
for every $x\in E$.
\end{teo}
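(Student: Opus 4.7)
The plan is to approximate $x\in E$ in $E$-norm by elements of $L^1\cap \mc M$ and, for such approximants, combine the a.u.\ convergence from Theorem~\ref{t35} with the $L^1$-mean convergence from Theorem~\ref{t33b} via a projection decomposition, using Lemma~\ref{l61} as the quantitative form of order continuity of $\|\cdot\|_E$.

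First I would establish density of $L^1\cap \mc M$ in $E$. For $x\in E$ with $x\ge 0$ (which is WLOG) and spectral family $\{e_\lb\}_{\lb\ge 0}$, set $x_m=\int_{1/m}^m \lb\,de_\lb$. Order continuity gives $E\su\mc R_\tau$, so $\tau\{x>1/m\}<\ii$; combined with $\|x_m\|_\ii\le m$, this yields $x_m\in L^1\cap\mc M$. The two remainders $\int_0^{1/m}\lb\,de_\lb$ and $\int_m^\ii \lb\,de_\lb$ decrease monotonically to $0$, so order continuity gives $\|x-x_m\|_E\to 0$. Since each $A_t$ is a contraction on $E$, a standard three-$\ep$ estimate reduces the theorem to proving $\|A_ty-y\|_E\to 0$ for $y\in L^1\cap\mc M$ with $y\ge 0$.

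For such $y$, fix $\ep>0$. By Lemma~\ref{l61}, pick $\dt>0$ so that $\|p\|_E<\ep$ whenever $p\in\mc P(\mc M)$ with $\tau(p)<\dt$. Then Theorem~\ref{t35} furnishes $e\in\mc P(\mc M)$ with $\tau(e^\perp)<\dt$ and $\|(A_ty-y)e\|_\ii\to 0$ as $t\to 0$; since $A_ty-y$ is self-adjoint (as $y\ge 0$ and $A_t$ is positive), also $\|e(A_ty-y)\|_\ii\to 0$. Split
\[
A_ty-y=e(A_ty-y)+e^\perp(A_ty-y).
\]
The second summand has left support in $e^\perp$, whence $\mu_s(e^\perp(A_ty-y))=0$ for $s\ge \tau(e^\perp)$ and $\mu_s\le 2\|y\|_\ii$ elsewhere, so $\|e^\perp(A_ty-y)\|_E\le 2\|y\|_\ii\|e^\perp\|_E<2\|y\|_\ii\,\ep$. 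The first summand has operator norm tending to $0$ by construction and $L^1$-norm dominated by $\|A_ty-y\|_1\to 0$ (Theorem~\ref{t33b}), so it tends to $0$ in the Banach space $(L^1\cap\mc M,\max(\|\cdot\|_1,\|\cdot\|_\ii))$, and hence in $\|\cdot\|_E$ via the continuous embedding $L^1\cap\mc M\hookrightarrow E$. Combining these estimates yields $\|A_ty-y\|_E<(1+2\|y\|_\ii)\,\ep$ for all sufficiently small $t$.

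The main obstacle is the continuity of the embedding $L^1\cap\mc M\hookrightarrow E$, which is what upgrades the $L^1\cap\mc M$-convergence of $e(A_ty-y)$ to $E$-convergence. There is no elementary pointwise domination of $\|\cdot\|_E$ by $\|\cdot\|_1$ and $\|\cdot\|_\ii$, but the inclusion is automatically bounded by the closed graph theorem: both Banach spaces embed continuously into $(L^1+\mc M,\|\cdot\|_{L^1+\mc M})$ and thus into the Hausdorff measure topology $t_\tau$, so the identity inclusion has closed graph. With this in hand, the remaining steps assemble cleanly from the a.u.\ ergodic theorem, the $L^1$-mean ergodic theorem, and Lemma~\ref{l61}.
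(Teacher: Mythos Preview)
Your proposal is correct and follows essentially the same route as the paper: reduce via spectral truncation and order continuity to $y\in L^1\cap\mc M$, then split $A_t(y)-y$ against a projection $e$ furnished by the a.u.\ convergence of Theorem~\ref{t35} with $\tau(e^\perp)$ small enough (Lemma~\ref{l61}) to control the $e^\perp$-piece by $\|y\|_\ii\|e^\perp\|_E$, and handle the $e$-piece by combining $\|\cdot\|_\ii\to 0$ with $\|\cdot\|_1\to 0$ (Theorem~\ref{t33b}) and the continuous embedding $L^1\cap\mc M\hookrightarrow E$. The only cosmetic difference is that the paper multiplies by $e_0$ and $e_0^\perp$ on the right (so no appeal to self-adjointness is needed) and calibrates $\dt_0$ against the fixed $m_0$; your closed-graph justification of the embedding is a welcome addition to what the paper simply asserts.
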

\begin{proof}
Without loss of generality assume that $x\geq 0$, and let $\{ e_\lb\}_{\lb\ge 0}$ be the spectral family of $x$. Given
$m\in\mathbb N$, denote
\[
a_m= \int\limits_0^{1/m}\lb de_\lb + \int\limits_{m}^\ii  \lb de_\lb, \ \  \ x_m= \int\limits_{1/m}^m\lb de_\lb.
\]
Then $a_m\in E$, $x_m\in E\cap L^1\cap \mc M$, $\|x_m\|_\ii\leq m$ and $x = a_m+x_m$ for all $m$.

Fix $\ep>0$. Since the norm $\|\cdot\|_E$ is order continuous and $a_m \downarrow 0$, it follows that $\|a_m\|_E \downarrow 0$. Consequently, there exists $m _0$ such that $\|a_{m_0}\|_E \leq\ep$, hence $\|A_t(a_{m_0})\|_E\leq\ep$ for all $t >0$.

Lemma \ref{l61} implies that there is $\dt_0>0$ such that the inequality $\tau(e)<\dt_0$ entails $\|e\|_E<\frac{\ep}{m_0}$,
$e\in\mc P(\mc M)$. By Theorem \ref{t35}, $A_t(x_{m_0}) \to  x_{m_0}$ a.u. as $t\to 0$. Therefore, there exists a projection
$e_0\in \mc P(\mc M)$ such that
\[
 \tau(e_0^\perp)<\dt_0 \text{ \ \ and \ \ }
\|(A_t(x_{m_0})- x_{m_0})e_0\|_\ii\to 0 \text{ \ \ as\ \ } t \to 0.
\]
It follows then that
\[
\|(A_t(x_{m_0})- x_{m_0})e_0^\perp\|_E \leq \|(A_t(x_{m_0})- x_{m_0})\|_\ii\, \|e_0^\perp\|_E<
2\ep \text{ \ \ for all \ \ } t >0.
\]
As $x_{m_0}\in L^1$, Theorem \ref{t33b} implies that we also have
\[
\|(A_t(x_{m_0})-x_{m_0})e_0\|_1\leq \|A_t(x_{m_0})-x_{m_0}\|_1\to 0 \text{ \ \ as\ \ }t \to 0.
\]
Consequently,
\[
\|(A_t(x_{m_0})-x_{m_0})e_0\|_{L^1\cap \mc M } \to 0\text{ \ \ as\ \ } t \to 0,
\]
and, since the symmetric space $L^1\cap \mc M$ is continuously embedded in the symmetric space
$(E,\|\cdot\|_E)$, we conclude that
\[
\|(A_t(x_{m_0})- x_{m_0})e_1\|_E \to 0 \text{ \ \ as\ \ } t \to 0.
\]
In particular, there exists $t(\ep)>0$ such that
\[
\|(A_t(x_{m_0})- x_{m_0})e_0\|_E<\ep \text{ \ \ whenever\ \ } 0<t< t(\ep).
\]
Therefore, given $0 < t < t(\ep)$, we have
\begin{equation*}
\begin{split}
\|A_t(x)- x\|_E &\leq \|(A_t(x_{m_0})- x_{m_0})e_0\|_E+\|(A_t(x_{m_0})- x_{m_0})e_0^\perp\|_E\\
&+\|A_t(a_{m_0})\|_E + \|a_{m_0}\|_E < 5\ep,
\end{split}
\end{equation*}
which completes the proof.
\end{proof}

\section{Applications}
In what follows we describe applications of Theorems \ref{t35} and \ref{t37} to noncommutative Orlicz, Lorentz and Marcinkiewicz spaces  and to noncommutative symmetric spaces with order continuous norm. Naturally,
we assume that $\tau(\mb 1)=\ii$.

1. Let $\Phi$ be an Orlicz function, that is, $\Phi:[0,\infty)\to [0,\infty)$ \ is a convex continuous at $0$ function
such that $\Phi(0)=0$ and $\Phi(u)>0$ if $u\ne 0$. Let
\[
L^\Phi=L^\Phi(\mathcal  M, \tau)=\left \{ x \in L^0(\mathcal  M, \tau): \ \tau \left (\Phi\left (\frac {|x|}a \right )\right )
<\infty \text { \ for some \ } a>0 \right \}
\]
be the corresponding {\it noncommutative Orlicz space},  and let
\[
\| x\|_\Phi=\inf \left \{ a>0:  \tau \left (\Phi\left (\frac {|x|}a \right )\right )  \leq 1 \right \}
\]
be the {\it Luxemburg norm} in $L^\Phi$  (see, for example  \cite{cl2}). If  $\tau(\mathbf 1) = \infty$, then $\tau \left (\Phi\left (\frac {\mathbf 1}a \right )\right )= \infty$ for all $a>0$, hence $\mathbf 1  \notin  L^\Phi$. Therefore, by Theorem \ref{t35}, for a given $x\in L^\Phi$, the averages (\ref{eq31}) converge a.u. to $x$ as $t \to 0$.

An Orlicz function $\Phi$ is said that to satisfy {\it $(\Delta_2)$-condition at $0$ (at $\ii$)} if there exist $u_0\in (0,\ii)$ and $k >0$ such that $\Phi(2u)<k\, \Phi(u)$ for all $0<u<u_0$ (respectively, for all $u >u_0$).
An Orlicz function $\Phi$ satisfies $(\Delta_2)$-condition at $0$ and at $\infty$ if and only if $(L^\Phi(0,\ii), \| \cdot\|_\Phi)$ has order continuous norm \cite[Ch.2, \S 2.1, Theorem 2.1.17]{es}. In this case, the corresponding noncommutative Orlicz space also has order continuous norm \cite[Proposition 3.6]{ddp}.

Therefore, Theorem \ref{t37} implies that if an Orlicz function $\Phi$ satisfies $(\Delta_2)$-condition at $0$ and at $\ii$, then $\| A_t(x)-x\|_\Phi\to 0$ as $t\to 0$ for every $x\in L^\Phi$.

2.  Let $\varphi$ be a concave function on $[0,\ii)$ with $\varphi(0) = 0$ and $\varphi(t)>0$ for all $t>0$, and let
\[
\Lb_\varphi=\Lb_\varphi(\mc M, \tau) =\left \{x \in  L^0(\mc M, \tau): \ \|x \|_{\Lb_\varphi} =
\int_0^\ii \mu_t(x)d\varphi(t)<\ii\right \},
\]
be the corresponding {\it noncommutative Lorentz space} (see, for example \cite{cl1}, \cite{cs}).

It is well-known that $(\Lb_\varphi, \|\cdot\|_{\Lb_\varphi})$ is a fully symmetric space; in addition, if $\varphi(\ii) = \ii$, then
$\mathbf 1\notin\Lb_\varphi$ and if $\varphi(\infty) < \ii$, then $\mathbf 1\in\Lambda_\varphi$. Therefore, in the case
$\varphi(\ii) = \ii$, for a given $x\in\Lb_\varphi$, the averages (\ref{eq31}) converge a.u. to $x$ as $t \to 0$.

By \cite[Ch.II, \S 5, Lemma 5.1]{kps}, \cite[Ch.9, \S 9.3, Theorem 9.3.1]{rmgp}), the space $(\Lb_\varphi(0,\ii),\| \cdot \|_{\Lb_\varphi})$, and hence the noncommutative Lorentz space $\Lb_\varphi(\mc M, \tau)$ \cite[Proposition 3.6]{ddp}) has order continuous norm if and only if $\varphi(+0) = 0$ and $\varphi(\ii) = \ii$. Therefore, Theorem \ref{t37} entails that if  
$\varphi$ is a concave function on $[0, \infty)$ with $\varphi(0) = 0$ and $\varphi(t) > 0$ for all $t>0$ such that 
$\varphi(+0) = 0$ and $\varphi(\ii)=\ii$, then $\|A_t(x)-x\|_{\Lb_\varphi}\to 0$ as $t\to 0$  for every 
$x\in \Lb_\varphi(\mc M, \tau) $.

3. Let $\varphi$ be as above, and let
\[
M_\varphi=M_\varphi(\mc M,\tau) = \left \{x \in  L^0(\mc M, \tau): \ \|x \|_{M_\varphi} =
\sup\limits_{0<s< \infty}\frac1{\varphi(s)} \int_0^{s} \mu_t(x) d t<\ii\right\},
\]
be the corresponding {\it noncommutative Marcinkiewicz space}. It is known that $(M_\varphi, \|\cdot\|_{M_\varphi})$
is a fully symmetric space; in addition, $\mathbf 1\notin M_\varphi$ if and only if $\lim\limits_{t \to \ii}\frac{\varphi(t)}t=0$.

Therefore, Theorem \ref{t35} implies that if $\lim\limits_{t\to\ii}\frac{\varphi(t)}t=0$, then
for a given $x\in M_\varphi$, the averages (\ref{eq31}) converge a.u. to $x$ as $t\to 0$.

If $\varphi(+0)>0$ and $\varphi(\ii) < \ii$, then $M_\varphi(\mc M,\tau) = L^1(\mc M,\tau)$ as the sets. In this case, the norms $\|\cdot \|_{M_\varphi}$ and $\|\cdot \|_1$  are equivalent \cite[Ch.6, \S 6.1, Proposition 6.1.1]{rmgp}, and we have,  by  Theorem \ref{t37}, that $\|A_t(x)-x\|_{M_\varphi}\to 0$ as $t\to 0$ for every $x\in M_\varphi(\mc M,\tau)$.


\end{document}